\newcommand{\todo}[1]{\textcolor{blue}{#1}}
\newcommand{\important}[1]{\textcolor{red}{#1}}
\newcommand{\editorial}[1]{\textcolor{blue}{[#1]}}
\newcommand{\pinv}{+}
\DeclareMathOperator{\trace}{Tr}
\newcommand{\nullspace}{\mathcal{N}}
\newcommand{\range}{\mathcal{R}}
\let\phi\varphi
\theoremstyle{definition}
\newtheorem{definition}{Definition}
\newtheorem{remark}{Remark}
\newtheorem{algorithm}{Algorithm}
\theoremstyle{plain}
\newtheorem{theorem}{Theorem}
\newcommand{\switch}[1]{\ifthenelse{\equal{#1}{0}}{
    \renewcommand{\todo}[1]{}
    \renewcommand{\important}[1]{}
    \renewcommand{\editorial}[1]{[##1]}
    }{}}
\title[On dynamic mode decomposition]{On dynamic mode decomposition: \\ theory and applications}
\author[J. H. Tu et al.]{}
\subjclass{Primary: 37M10, 65P99; Secondary: 47B33.}
\keywords{Dynamic mode decomposition, Koopman operator, spectral analysis, time series analysis, reduced-order models.}
\email{jhtu@berkeley.edu}
\email{cwrowley@princeton.edu}
\email{dluchten@princeton.edu}
\email{sbrunton@uw.edu}
\email{kutz@uw.edu}
\begin{document}
\maketitle

\centerline{\scshape Jonathan H. Tu, Clarence W. Rowley, Dirk M. Luchtenburg,}
\medskip
{\footnotesize
\centerline{Dept.\ of Mechanical and Aerospace Engineering}
\centerline{Princeton University}
\centerline{Princeton, NJ 08544, USA}
}

\medskip

\centerline{\scshape Steven L. Brunton, and J. Nathan Kutz}
\medskip

{\footnotesize
\centerline{Dept.\ of Applied Mathematics}
\centerline{University of Washington}
\centerline{Seattle, WA 98195, USA}
}

\bigskip





\vspace{12pt}
\important{
TODO items:
\begin{itemize}
    \item Re-run examples or verify that for this data exact and projected DMD are the same.
    \item Add notes in the examples about what scaling is used.
\end{itemize}
}


\begin{abstract}
    Originally introduced in the fluid mechanics community, dynamic mode decomposition (DMD) has emerged as a powerful tool for analyzing the dynamics of nonlinear systems.
    However, existing DMD theory deals primarily with sequential time series for which the measurement dimension is much larger than the number of measurements taken.
    We present a theoretical framework in which we define DMD as the eigendecomposition of an approximating linear operator.
    This generalizes DMD to a larger class of datasets, including nonsequential time series.
    We demonstrate the utility of this approach by presenting novel sampling strategies that increase computational efficiency and mitigate the effects of noise, respectively.
    We also introduce the concept of linear consistency, which helps explain the potential pitfalls of applying DMD to rank-deficient datasets, illustrating with examples.
    Such computations are not considered in the existing literature, but can be understood using our more general framework.
    In addition, we show that our theory strengthens the connections between DMD and Koopman operator theory.
    It also establishes connections between DMD and other techniques, including the eigensystem realization algorithm (ERA), a system identification method, and linear inverse modeling (LIM), a method from climate science.
    We show that under certain conditions, DMD is equivalent to LIM.
\end{abstract}


\section{Introduction}
\label{sec:intro}

Fluid flows often exhibit low-dimensional behavior, despite the fact that they are governed by infinite-dimensional partial differential equations (the Navier--Stokes equations).
For instance, the main features of the laminar flow past a two-dimensional cylinder can be described using as few as three ordinary differential equations~\cite{noackJFM03}.
To identify these low-order dynamics, such flows are often analyzed using \emph{modal decomposition} techniques, including proper orthogonal decomposition (POD), balanced proper orthogonal decomposition (BPOD), and dynamic mode decomposition (DMD).
Such methods describe the fluid state (typically the velocity or vorticity field) as a superposition of empirically computed basis vectors, or ``modes.''
In practice, the number of modes necessary to capture the gross behavior of a flow is often many orders of magnitude smaller than the state dimension of the system (e.g., $\mathcal{O}(10)$ compared to $\mathcal{O}(10^6)$).

Since it was first introduced in~\cite{schmidAPS08}, DMD has quickly gained popularity in the fluids community, primarily because it provides information about the \emph{dynamics} of a flow, and is applicable even when those dynamics are \emph{nonlinear}~\cite{rowleyJFM09}.
A typical application involves collecting a time series of experimental or simulated velocity fields, and from them computing DMD modes and eigenvalues.
The modes are spatial fields that often identify coherent structures in the flow.
The corresponding eigenvalues define growth/decay rates and oscillation frequencies for each mode.
Taken together, the DMD modes and eigenvalues describe the dynamics observed in the time series in terms of oscillatory components.
In contrast, POD modes optimally reconstruct a dataset, with the modes ranked in terms of energy content~\cite{HLBR-11}.
BPOD modes identify spatial structures that are important for capturing linear input-output dynamics~\cite{rowleyIJBC05}, and can also be interpreted as an optimal decomposition of two (dual) datasets~\cite{Singler:2010}.

At first glance, it may seem dubious that a nonlinear system could be described by superposition of modes whose dynamics are governed by eigenvalues.
After all, one needs a {\em linear} operator in order to talk about eigenvalues.
However, it was shown in~\cite{rowleyJFM09} that DMD is closely related to a spectral analysis of the Koopman operator.
The Koopman operator is a linear but infinite-dimensional operator whose modes and eigenvalues capture the evolution of observables describing any (even nonlinear) dynamical system.
The use of its spectral decomposition for data-based modal decomposition and model reduction was first proposed in~\cite{mezicND05}.
DMD analysis can be considered to be a numerical approximation to Koopman spectral analysis, and it is in this sense that DMD is applicable to nonlinear systems.
In fact, the terms ``DMD mode'' and ``Koopman mode'' are often used interchangably in the fluids literature.

Much of the recent work involving DMD has focused on its application to different flow configurations.
For instance, DMD has been applied in the study of the wake behind a flexible membrane~\cite{schmidJFM10}, the flow around high-speed trains~\cite{muldCF12}, instabilities in annular liquid sheets~\cite{dukeJFM12}, shockwave-turbulent boundary layer interactions~\cite{grilliJFM12}, detonation waves~\cite{massaPF12}, cavity flows~\cite{schmidJFM10,seenaIJHFF11}, and various jets~\cite{rowleyJFM09,schmidJFM10,schmidEF11,schmidTCFD11,schmidEF12,semeraroEF12}.
There have also been a number of efforts regarding the numerics of the DMD algorithm, including the development of memory-efficient algorithms~\cite{tuJCP12,belsonACMTMS13}, an error analysis of DMD growth rates~\cite{dukeEF12}, and a method for selecting a sparse basis of DMD modes~\cite{jovanovicArxiv13}.
Variants of the DMD algorithm have also been proposed, including optimized DMD~\cite{chenJNLS12} and optimal mode decomposition~\cite{goulartCDC12,wynnJFM12}.
Theoretical work on DMD has centered mainly on exploring connections with other methods, such as Koopman spectral analysis~\cite{mezicARFM12,rowleyJFM09,bagheriJFM13}, POD~\cite{schmidJFM10}, and Fourier analysis~\cite{chenJNLS12}.
Theorems regarding the existence and uniqueness of DMD modes and eigenvalues can be found in~\cite{chenJNLS12}.
For a review of the DMD literature, we refer the reader to~\cite{mezicARFM12}.

Many of the papers cited above mention the idea that DMD is able to characterize nonlinear dynamics through an analysis of some approximating linear system.
In this work, we build on this notion.
We present DMD as an analysis of \emph{pairs} of $n$-dimensional data vectors $(x_k, y_k)$, in contrast to the sequential time series that are typically considered.
From these data we construct a particular linear operator~$A$ and define DMD as the eigendecomposition of that operator (see Definition~\ref{def:exact-DMD}).
We show that DMD modes satisfying this definition can be computed using a modification of the algorithm proposed in~\cite{schmidJFM10}.
Both algorithms generate the same eigenvalues, with the modes differing by a projection (see Theorem~\ref{thm:proj-DMD-modes}).

There is of course no guarantee that analyzing this particular approximating operator is meaningful for data generated by nonlinear dynamics.
To this end, we show that our definition strengthens the connections between DMD and Koopman operator theory, extending those connections to include more general sampling strategies.
This is important, as it allows us to maintain the interpretion of DMD as an approximation to Koopman spectral analysis.
We can then be confident that DMD is useful for characterizing nonlinear dynamics.
Furthermore, we show that the connections between DMD and Koopman spectral analysis hold not only when the vectors $x_k$ are linearly independent, as assumed in~\cite{rowleyJFM09}, but under a more general condition we refer to as linear consistency (see Definition~\ref{def:linear-consistency}).
When the data are not linearly consistent, the Koopman analogy can break down and DMD analysis may produce either meaningful or misleading results.
We show an example of each and explain the results using our approximating-operator definition of DMD.
(For a more detailed investigation of how well DMD eigenvalues approximate Koopman eigenvalues, we refer the reader to~\cite{bagheriJFM13}.)

The generality of our framework has important practical implications as well.
To this end, we present examples demonstrating the benefits of applying DMD to nonsequential time series.
For instance, we show that nonuniform temporal sampling can provide increased computational efficiency, with little effect on accuracy of the dominant DMD modes and eigenvalues.
We also show that noise in experimental datasets can be dealt with by concatenating data from multiple runs of an experiment.
The resulting DMD computation produces a spectrum with sharper, more isolated peaks, allowing us to identify higher-frequency modes that are obscured in a traditional DMD computation.

Finally, our framework highlights the connections between DMD and other well-known methods, specifically the eigensystem realization algorithm (ERA) and linear inverse modeling (LIM).
The ERA is a control-theoretic method for system identification of linear systems~\cite{hoACCST65,juangJGCD85,maTCFD11}.
We show that when computed from the same data, DMD eigenvalues reduce to poles of an ERA model.
This connection motivates the use of ERA-inspired strategies for dealing with certain limitations of DMD.
LIM is a modeling procedure developed in the climate science community~\cite{penlandMWR89,penlandJClimate93}.
We show that under certain conditions, DMD is equivalent to LIM.
Thus it stands to reason that practioners of DMD could benefit from an awareness of related work in the climate science literature.

The remainder of this work is organized as follows: in Section~\ref{sec:theory}, we propose and discuss a new definition of DMD.
We provide several different algorithms for computing DMD modes and eigenvalues that satisfy this new definition and show that these are closely related to the modes and eigenvalues computed using the currently accepted SVD-based DMD algorithm~\cite{schmidJFM10}.
A number of examples are presented in Section~\ref{sec:applications}.
These explore the application of DMD to rank-deficient datasets and nonsequential time series.
Section~\ref{sec:connections} describes the connections between DMD and Koopman operator theory, the ERA, and LIM, respectively.
We summarize our results in Section~\ref{sec:concl}.


\section{Theory}
\label{sec:theory}

Since its first appearance in 2008~\cite{schmidAPS08}, DMD has been defined by an algorithm (the specifics of which are given in Algorithm~\ref{alg:proj-DMD} below).
Here, we present a more general, non-algorithmic definition of DMD.
Our definition emphasizes data that are collected as a set of \emph{pairs} $\{(x_k,y_k)\}_{k=1}^m$, rather than as a sequential time series $\{z_k\}_{k=0}^m$.
We show that our DMD definition and algorithm are closely related to the currently accepted algorithmic definition.
In fact, the two approaches produce the same DMD eigenvalues; it is only the DMD modes that differ.

\subsection{Standard definition}
\label{sec:standard-def}

Originally, the DMD algorithm was formulated in terms of a companion matrix~\cite{schmidAPS08,rowleyJFM09}, which highlights its connections to the Arnoldi algorithm and to Koopman operator theory.
The SVD-based algorithm presented in~\cite{schmidJFM10} is more numerically stable, and is now generally accepted as the defining DMD algorithm; we describe this algorithm below.

Consider a sequential set of data vectors $\{z_0,\ldots,z_m\}$, where each $z_k \in \mathbb{R}^n$.
We assume that the data are generated by linear dynamics
\begin{equation}
    \label{eq:lin-dyn}
    z_{k+1} = A z_k,
\end{equation}
for some (unknown) matrix~$A$.
(Alternatively, the vectors $z_k$ can be sampled from a continuous evolution $z(t)$, in which case $z_k = z(k \Delta t)$ and a fixed sampling rate $\Delta t$ is assumed.)
When DMD is applied to data generated by nonlinear dynamics, it is assumed that there exists an operator~$A$ that approximates those dynamics.
The DMD modes and eigenvalues are intended to approximate the eigenvectors and eigenvalues of~$A$.
The algorithm proceeds as follows:

\begin{algorithm}[Standard DMD]
    \label{alg:proj-DMD}
    \mbox{}

    \begin{enumerate}
        \item Arrange the data $\{z_0,\ldots,z_m\}$ into matrices
        \begin{equation}
            \label{eq:def-XY-sequential}
            X \triangleq \begin{bmatrix} z_0 & \cdots & z_{m-1} \end{bmatrix},
            \qquad
            Y \triangleq \begin{bmatrix} z_1 & \cdots & z_m \end{bmatrix}.
        \end{equation}

        \item Compute the (reduced) SVD of $X$ (see~\cite{trefethenNLA97}), writing
        \begin{equation}
            \label{svd-of-X}
            X = U \Sigma V^*,
        \end{equation}
        where $U$ is $n \times r$, $\Sigma$ is diagonal and $r \times r$, $V$ is $m \times r$, and~$r$ is the rank of~$X$.

        \item Define the matrix
        \begin{equation}
            \label{eq:def-Atilde}
            \tilde A \triangleq U^* Y V \Sigma^{-1}.
        \end{equation}

        \item Compute eigenvalues and eigenvectors of $\tilde A$, writing
        \begin{equation}
            \label{eq:low-order-eig-prob}
            \tilde A w = \lambda w.
        \end{equation}

        \item The DMD mode corresponding to the DMD eigenvalue~$\lambda$ is then given by
        \begin{equation}
            \label{eq:def-proj-DMD-mode}
            \hat \phi \triangleq U w.
        \end{equation}

        \item If desired, the DMD modes can be scaled in a number of ways, as described in Appendix~\ref{sec:mode-scalings}.
    \end{enumerate}
\end{algorithm}
In this paper, we will refer to the modes produced by Algorithm~\ref{alg:proj-DMD} as {\em projected DMD modes}, for reasons that will become apparent in Section~\ref{ssec:compare-defs} (in particular, see Theorem~\ref{thm:proj-DMD-modes}).

\subsection{New definition}
\label{ssec:new-def}

The standard definition of DMD assumes a sequential set of data vectors $\{z_0,\ldots,z_m\}$ in which the order of vectors $z_k$ is critical.
Furthermore, the vectors should (at least approximately) satisfy the relation~\eqref{eq:lin-dyn}.
Here, we relax these restrictions on the data, and consider data \emph{pairs} $\{(x_1,y_1),\ldots,(x_m,y_m)\}$.
We then define DMD in terms of the $n \times m$ data matrices
\begin{equation}
    \label{eq:def-XY-general}
    X \triangleq \begin{bmatrix} x_1 & \cdots & x_m \end{bmatrix},
    \qquad
    Y \triangleq \begin{bmatrix} y_1 & \cdots & y_m \end{bmatrix}.
\end{equation}
Note that the formulation~\eqref{eq:def-XY-sequential} is a special case of~\eqref{eq:def-XY-general}, with $x_k = z_{k-1}$ and $y_k = z_k$.
In order to relate this method to the standard DMD procedure, we may assume that
\begin{equation*}
    y_k = \hat A x_k
\end{equation*}
for some (unknown) matrix~$\hat A$.
However, the procedure here is applicable more generally.
We define the DMD modes of this dataset as follows:

\begin{definition}[Exact DMD]
    \label{def:exact-DMD}
    For a dataset given by~\eqref{eq:def-XY-general}, define the operator
    \begin{equation}
        \label{eq:def-A}
        A \triangleq Y X^\pinv,
    \end{equation}
    where $X^\pinv$ is the pseudoinverse of $X$.
    The \emph{dynamic mode decomposition} of the pair $(X, Y)$ is given by the eigendecomposition of $A$.
    That is, the DMD modes and eigenvalues are the eigenvectors and eigenvalues of $A$.
\end{definition}

\begin{remark}
    \label{rmk:best-fit-operator}
    The operator $A$ in~\eqref{eq:def-A} is the least-squares/minimum-norm solution to the potentially over- or under-constrained problem $A X = Y$.
    That is, if there is an exact solution to $A X = Y$ (which is always the case if the vectors $x_k$ are linearly independent), then the choice~\eqref{eq:def-A} minimizes $\|A\|_F$, where $\|A\|_F=\trace(AA^*)^{1/2}$ denotes the Frobenius norm.
    If there is no $A$ that exactly satisfies $AX=Y$, then the choice~\eqref{eq:def-A} minimizes $\|AX-Y\|_F$.
\end{remark}

When $n$ is large, as is often the case with fluid flow data, it may be inefficient to compute the eigendecomposition of the $n \times n$ matrix $A$.
In some cases, even storing $A$ in memory can be prohibitive.
Using the following algorithm, the DMD modes and eigenvalues can be computed without an explicit representation or direct manipulations of $A$.

\begin{algorithm}[Exact DMD]
    \label{alg:exact-DMD}
  \mbox{}

  \begin{enumerate}
      \item Arrange the data pairs $\{(x_1,y_1),\ldots,(x_m,y_m)\}$ into matrices $X$ and~$Y$, as in~\eqref{eq:def-XY-general}.

      \item Compute the (reduced) SVD of $X$, writing $X = U \Sigma V^*$.

      \item Define the matrix $\tilde A \triangleq U^* Y V \Sigma^{-1}$.

      \item Compute eigenvalues and eigenvectors of $\tilde A$, writing $\tilde A w = \lambda w$.
      Each nonzero eigenvalue~$\lambda$ is a DMD eigenvalue.

      \item The DMD mode corresponding to~$\lambda$ is then given by
      \begin{equation}
          \label{eq:def-exact-DMD-mode}
          \phi \triangleq \frac{1}{\lambda} Y V \Sigma^{-1} w.
      \end{equation}

      \item If desired, the DMD modes can be scaled in a number of ways, as described in Appendix~\ref{sec:mode-scalings}.
  \end{enumerate}
\end{algorithm}

\begin{remark}
    When $n \gg m$, the above algorithm can be modified to reduce computational costs.
    For instance, the SVD of $X$ can be computed efficiently using the method of snapshots~\cite{sirovichQAM87_2}.
    This involves computing the correlation matrix $X^* X$.
    The product $U^* Y$ required to form $\tilde{A}$ can be cast in terms of a product $X^* Y$, using~\eqref{svd-of-X} to substitute for $U$.
    If $X$ and $Y$ happen to share columns, as is the case for sequential time series, then $X^* Y$ will share elements with $X^* X$, reducing the number of new computations required.
    (See Section~\ref{ssec:nonseq-sampling} for more on sequential versus nonsequential time series.)
\end{remark}

Algorithm~\ref{alg:exact-DMD} is nearly identical to Algorithm~\ref{alg:proj-DMD} (originally presented in~\cite{schmidJFM10}).
In fact, the only difference is that the DMD modes are given by~\eqref{eq:def-exact-DMD-mode}, whereas in Algorithm~\ref{alg:proj-DMD}, they are given by~\eqref{eq:def-proj-DMD-mode}.
This modification is subtle, but important, as we discuss in Section~\ref{ssec:compare-defs}.

\begin{remark}
    Though the original presentations of DMD~\cite{rowleyJFM09,schmidJFM10} assume $X$ and $Y$ of the form given by~\eqref{eq:def-XY-sequential}, Algorithm~\ref{alg:proj-DMD} does not make use of this structure.
    That is, the algorithm can be carried out for the general case of $X$ and $Y$ given by~\eqref{eq:def-XY-general}.
    (This point has been noted previously, in~\cite{wynnJFM12}.)
\end{remark}

\begin{theorem}
    \label{thm:alg-yields-eigdecomp-of-A}
    Each pair $(\phi, \lambda)$ generated by Algorithm~\ref{alg:exact-DMD} is an eigenvector/eigenvalue pair of~$A$.  Furthermore, the algorithm identifies all of the nonzero eigenvalues of~$A$.
\end{theorem}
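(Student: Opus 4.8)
The plan is to exploit the explicit form of the pseudoinverse coming from the reduced SVD. Writing $X = U\Sigma V^*$, we have $X^\pinv = V\Sigma^{-1}U^*$ (using that $\Sigma$ is invertible and $U^*U = V^*V = I_r$), so that
\begin{equation*}
    A = YX^\pinv = YV\Sigma^{-1}U^*.
\end{equation*}
The whole argument then rests on the factorization $A = BC$ and $\tilde A = CB$, where $B \triangleq YV\Sigma^{-1}$ is $n\times r$ and $C \triangleq U^*$ is $r\times n$. With this notation the mode formula~\eqref{eq:def-exact-DMD-mode} reads $\phi = \frac{1}{\lambda}Bw$, and $\tilde A = CB$.

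For the first claim I would verify $A\phi = \lambda\phi$ by direct substitution. Since $A = BC$ and $\phi = \frac{1}{\lambda}Bw$, we compute $A\phi = BC\cdot\frac{1}{\lambda}Bw = \frac{1}{\lambda}B(CB)w = \frac{1}{\lambda}B\tilde Aw = \frac{1}{\lambda}B(\lambda w) = Bw = \lambda\phi$, using $\tilde Aw = \lambda w$. It then remains to check that $\phi$ is a genuine (nonzero) eigenvector. Because $\tilde Aw = CBw = \lambda w$ with $w\neq 0$, if $Bw$ were zero we would have $\lambda w = 0$ and hence $\lambda = 0$; so for every nonzero $\lambda$ we get $Bw\neq 0$, and therefore $\phi = \frac{1}{\lambda}Bw \neq 0$. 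This uses the hypothesis $\lambda\neq 0$ in an essential way.

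For the second claim I would invoke the standard fact that for any $B$ ($n\times r$) and $C$ ($r\times n$) the products $BC$ and $CB$ share exactly the same nonzero eigenvalues. Concretely, if $Av = \lambda v$ with $\lambda\neq 0$ and $v\neq 0$, set $u\triangleq Cv$; then $\tilde Au = CB(Cv) = C(BCv) = C(\lambda v) = \lambda u$, and $u\neq 0$ since $Cv=0$ would force $\lambda v = BCv = 0$. Thus every nonzero eigenvalue of $A$ is an eigenvalue of $\tilde A$, and since step~4 of Algorithm~\ref{alg:exact-DMD} returns every nonzero eigenvalue of $\tilde A$, none is missed. The main obstacle is precisely this ``all nonzero eigenvalues'' direction: the first claim alone only shows that the algorithm's outputs lie among the eigenvalues of $A$, so the real content is the $BC$--$CB$ spectral correspondence together with the care needed to ensure the map $v\mapsto Cv$ does not collapse an eigenvector to zero. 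One should also note that $A$ may carry the eigenvalue $0$ (with eigenvectors outside the range of $B$), which the algorithm legitimately discards.
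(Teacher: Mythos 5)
Your proposal is correct and follows essentially the same route as the paper: the paper also factors $A = BU^*$ and $\tilde A = U^*B$ with $B = YV\Sigma^{-1}$ (your $C$ is exactly $U^*$), verifies $A\phi = \lambda\phi$ by the same substitution, and proves the converse by mapping an eigenvector $\phi$ of $A$ to $w = U^*\phi$ with the same nonvanishing arguments. Your explicit framing of this as the standard $BC$--$CB$ nonzero-spectrum correspondence is a nice way to package it, but the underlying computation is identical.
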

\begin{proof}
    From the SVD $X = U \Sigma V^*$, we may write the pseudoinverse of $X$ as
    \begin{equation*}
        X^\pinv = V \Sigma^{-1} U^*,
    \end{equation*}
    so from~\eqref{eq:def-A}, we find
    \begin{equation}
        \label{eq:A-as-SVD-of-X}
        A = Y V \Sigma^{-1} U^* = B U^*,
    \end{equation}
    where
    \begin{equation}
        \label{eq:def-B}
        B \triangleq Y V \Sigma^{-1}.
    \end{equation}
    In addition, we can rewrite~\eqref{eq:def-Atilde} as
    \begin{equation}
        \label{eq:Atilde-as-B}
        \tilde A = U^* Y V \Sigma^{-1} = U^* B.
    \end{equation}
    Now, suppose that $\tilde A w = \lambda w$, with $\lambda \ne 0$, and let $\phi= \frac{1}{\lambda} B w$, as in~\eqref{eq:def-exact-DMD-mode}.
    Then
    \begin{equation*}
        A \phi = \frac{1}{\lambda} B U^* B w = B \frac{1}{\lambda} \tilde A w = B w = \lambda \phi.
    \end{equation*}
    In addition, $\phi\ne 0$, since if $Bw=0$, then $U^*Bw=\tilde A w=0$,
    so $\lambda=0$.  Hence, $\phi$ is an eigenvector of~$A$ with
    eigenvalue~$\lambda$.

    To show that Algorithm~\ref{alg:exact-DMD} identifies all of the nonzero eigenvalues of~$A$, suppose $A \phi = \lambda \phi$, for $\lambda \ne 0$, and let $w = U^* \phi$.  Then
    \begin{equation*}
      \tilde A w = U^*BU^*\phi = U^* A\phi = \lambda U^*\phi = \lambda w.
    \end{equation*}
    Furthermore, $w\ne 0$, since if $U^*\phi=0$, then $BU^*\phi=A\phi=0$, and $\lambda=0$.
    Thus, $w$ is an eigenvector of~$\tilde A$ with eigenvalue~$\lambda$, and is identified by Algorithm~\ref{alg:exact-DMD}.
\end{proof}

\begin{remark}
    Algorithm~\ref{alg:exact-DMD} may also be used to find certain eigenvectors with $\lambda=0$ (that is, in the nullspace of~$A$).
    In particular, if $\tilde{A} w = 0$ and $\phi = Y V \Sigma^{-1} w \neq 0$, then $\phi$ is an eigenvector with $\lambda=0$ (and is in the image of $Y$); if $\tilde{A} w = 0$ and $Y V \Sigma^{-1} w = 0$, then $\phi = U w$ is an eigenvector with $\lambda=0$ (and is in the image of $X$).
    However, DMD modes corresponding to zero eigenvalues are usually not of interest, since they do not play a role in the dynamics.
\end{remark}

Next, we characterize the conditions under which the operator~$A$ defined by~\eqref{eq:def-A} satisfies $Y = A X$.
We emphasize that this does not require that~$Y$ is generated from~$X$ through linear dynamics defined by~$A$; we place no restrictions on the data pairs $(x_k,y_k)$.
To this end, we introduce the following definition:

\begin{definition}[Linear consistency]
    \label{def:linear-consistency}
    Two $n \times m$ matrices $X$ and $Y$ are {\em linearly consistent} if, whenever $X c = 0$, then $Y c = 0$ as well.
\end{definition}

Thus $X$ and $Y$ are linearly consistent if and only if the nullspace of $Y$ contains the nullspace of $X$.
If the vectors~$x_k$ (columns of~$X$) are linearly independent, the nullspace of~$X$ is~$\{0\}$ and linear consistency is satisfied trivially.
However, linear consistency does not imply that the columns of~$X$ are linearly independent.
(Linear consistency will play an important role in establishing the connection between DMD modes and Koopman modes, as discussed in Section~\ref{ssec:Koopman}.)

The notion of linear consistency makes intuitive sense if we think of the vectors~$x_k$ as inputs and the vectors~$y_k$ as outputs.
Definition~\ref{def:linear-consistency} follows from the idea that two identical inputs should not have different outputs, generalizing the idea to arbitrary linearly dependent sets of inputs.
It turns out that for linearly consistent data, the approximating operator $A$ relates the datasets exactly ($A X = Y$), even if the data are generated by nonlinear dynamics.

\begin{theorem}
    \label{thm:AX-eq-Y}
    Define $A = Y X^\pinv$, as in Definition~\ref{def:exact-DMD}.
    Then $Y = A X$ if and only if $X$ and~$Y$ are linearly consistent.
\end{theorem}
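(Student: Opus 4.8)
The plan is to rewrite the matrix equation $Y = AX$ using the defining property of the pseudoinverse and reduce it to a statement about null spaces that matches Definition~\ref{def:linear-consistency} directly. The only ingredient I need is the identity $X X^\pinv X = X$, so I will avoid invoking the full orthogonal-projection interpretation of $X^\pinv X$. Since $A = Y X^\pinv$, I have $AX = Y X^\pinv X$, so the claim $Y = AX$ is equivalent to $Y X^\pinv X = Y$, i.e. to $Y\,(I - X^\pinv X) = 0$. The whole proof will hinge on recognizing that the columns of $I - X^\pinv X$ lie in $\nullspace(X)$.

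The forward direction is immediate and I would dispatch it first. Suppose $Y = AX$. If $c$ is any vector with $Xc = 0$, then $Yc = A X c = A\,(0) = 0$, so the null space of $X$ is contained in that of $Y$; by Definition~\ref{def:linear-consistency} the matrices $X$ and $Y$ are linearly consistent. No use of the specific form $A = Y X^\pinv$ is needed here.

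For the substantive converse, assume $X$ and $Y$ are linearly consistent. Using the pseudoinverse identity, I compute $X\,(I - X^\pinv X) = X - X X^\pinv X = X - X = 0$. Hence for each standard basis vector $e_j$, the column $(I - X^\pinv X)e_j$ satisfies $X\,(I - X^\pinv X)e_j = 0$, so it belongs to $\nullspace(X)$. Linear consistency then forces $Y\,(I - X^\pinv X)e_j = 0$ for every $j$, which is exactly the statement $Y\,(I - X^\pinv X) = 0$. Rearranging gives $Y X^\pinv X = Y$, and therefore $AX = Y X^\pinv X = Y$, completing this direction.

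I do not expect a serious obstacle here; the argument is short. The one point requiring care is the passage from the matrix identity $Y\,(I - X^\pinv X) = 0$ back to the per-vector null-space condition of Definition~\ref{def:linear-consistency}, which I handle by reading off the columns of $I - X^\pinv X$ one at a time. The only external fact invoked is $X X^\pinv X = X$, a defining Moore--Penrose identity, so the proof stays entirely elementary.
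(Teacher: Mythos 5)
Your proof is correct and follows essentially the same route as the paper's: both the forward direction (a null vector of $X$ must be a null vector of $Y$ if $Y=AX$) and the converse (reduce $Y=AX$ to $Y\,(I - X^\pinv X)=0$ and invoke $\nullspace(X)\subset\nullspace(Y)$) match the paper's argument step for step. The only divergence is that where the paper identifies $I - X^\pinv X$ as the orthogonal projection onto $\nullspace(X)$, you get by with the weaker and more elementary Moore--Penrose identity $X X^\pinv X = X$ to place the columns of $I - X^\pinv X$ in $\nullspace(X)$, which is all that is actually needed.
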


\begin{proof}
    First, suppose~$X$ and~$Y$ are not linearly consistent.
    Then there exists~$v$ in the nullspace of~$X$ (denoted $\nullspace(X)$) such that $Y v \ne 0$.
    But then $A X v = 0 \ne Y v$, so $A X \ne Y$ for any~$A$.

    Conversely, suppose~$X$ and~$Y$ are linearly consistent: that is, $\nullspace(X) \subset \nullspace(Y)$.
    Then
    \begin{equation*}
        Y - A X = Y - Y X^\pinv X = Y (I - X^\pinv X).
    \end{equation*}
    Now, $X^\pinv X$ is the orthogonal projection onto the range of $X^*$ (denoted $\range(X^*)$), so $I - X^\pinv X$ is the orthogonal projection onto $\range(X^*)^\perp = \nullspace(X)$.
    Thus, since $\nullspace(X) \subset \nullspace(Y)$, it follows that $Y (I - X^\pinv X) = 0$, so $Y = A X$.
\end{proof}

We note that given Definition~\ref{def:exact-DMD}, is natural to define adjoint DMD modes as the eigenvectors of $A^*$ (or, equivalently, the left eigenvectors of~$A$).
Computing such modes can be done easily with slight modifications to Algorithm~\ref{alg:exact-DMD}.
Let $z$ be an adjoint eigenvector of $\tilde{A}$, so that $z^*\tilde{A} = \lambda z^*$.
Then one can easily verify that $\psi \triangleq U z$ is an adjoint eigenvector of~$A$: $\psi^*A=\lambda\psi^*$.
It is interesting to note that while the DMD modes corresponding to nonzero eigenvalues lie in the image of $Y$, the adjoint DMD modes lie in the image of $X$.

\subsection{Comparing definitions}
\label{ssec:compare-defs}

Algorithm~\ref{alg:proj-DMD} (originally presented in~\cite{schmidJFM10}) has come to dominate among DMD practioners due to its numerical stability.
Effectively, it has become the working definition of DMD.
As mentioned above, it differs from Algorithm~\ref{alg:exact-DMD} only in that the (projected) DMD modes are given by
\begin{equation*}
    \hat \phi \triangleq U w,
\end{equation*}
where $w$ is an eigenvector of $\tilde{A}$, while the exact modes DMD modes are given by~\eqref{eq:def-exact-DMD-mode} as
\begin{equation*}
    \phi \triangleq \frac{1}{\lambda} Y V \Sigma^{-1} w.
\end{equation*}

Since $U$ contains left singular vectors of $X$, we see that the original modes defined by~\eqref{eq:def-proj-DMD-mode} lie in the image of $X$, while those defined by~\eqref{eq:def-exact-DMD-mode} lie in the image of $Y$.
As a result, the modes~$\hat\phi$ are not eigenvectors of the approximating linear operator~$A$.
Are they related in any way to the eigenvectors of~$A$?
The following theorem establishes that they are, and motivates the terminology {\em projected} DMD modes.

\begin{theorem}
    \label{thm:proj-DMD-modes}
    Let $\tilde A w = \lambda w$, with $\lambda \ne 0$, and let $\mathbb{P}_X$ denote the orthogonal projection onto the image of~$X$.
    Then $\hat{\phi}$ given by~\eqref{eq:def-proj-DMD-mode} is an eigenvector of $\mathbb{P}_X A$ with eigenvalue~$\lambda$.
    Furthermore, if $\phi$ is given by~\eqref{eq:def-exact-DMD-mode}, then $\hat \phi = \mathbb{P}_X \phi$.
\end{theorem}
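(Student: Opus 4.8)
The plan is to prove the two claims separately, exploiting the factorizations $A = BU^*$ and $\tilde A = U^*B$ with $B = YV\Sigma^{-1}$ that were established in the proof of Theorem~\ref{thm:alg-yields-eigdecomp-of-A}. First I would record an explicit formula for $\mathbb{P}_X$. Since $U$ contains an orthonormal basis for the image of $X$ (its columns are left singular vectors), the orthogonal projection onto that image is $\mathbb{P}_X = UU^*$. This is the single fact that links the two modes: the exact mode $\phi = \frac{1}{\lambda}Bw$ lives in the image of $Y$, while $\hat\phi = Uw$ lives in the image of $X$, and $\mathbb{P}_X = UU^*$ is precisely the operator that moves between them.

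For the second claim I would compute $\mathbb{P}_X\phi$ directly. Using $\mathbb{P}_X = UU^*$ and $\phi = \frac{1}{\lambda}Bw$, I get
\begin{equation*}
    \mathbb{P}_X \phi = UU^*\tfrac{1}{\lambda}Bw = U\tfrac{1}{\lambda}(U^*B)w = U\tfrac{1}{\lambda}\tilde A w = U\tfrac{1}{\lambda}\lambda w = Uw = \hat\phi,
\end{equation*}
where the middle step uses $U^*B = \tilde A$ and the next uses the eigenvalue relation $\tilde A w = \lambda w$. This gives $\hat\phi = \mathbb{P}_X\phi$ immediately.

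For the first claim I would verify that $\hat\phi$ is an eigenvector of $\mathbb{P}_X A$. Writing $\mathbb{P}_X A = UU^* A = UU^*BU^* = U\tilde A U^*$ (again substituting $A = BU^*$ and $U^*B = \tilde A$), I apply this to $\hat\phi = Uw$ and use $U^*U = I$ (the reduced SVD has orthonormal columns):
\begin{equation*}
    \mathbb{P}_X A \hat\phi = U\tilde A U^* U w = U\tilde A w = U\lambda w = \lambda\, Uw = \lambda\hat\phi.
\end{equation*}
To complete the argument I must check $\hat\phi \ne 0$; this follows because $w \ne 0$ (it is an eigenvector) and $U$ has full column rank, so $Uw = 0$ would force $w = U^*Uw = 0$.

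None of the steps presents a genuine obstacle, as each reduces to a substitution from the relations already derived in Theorem~\ref{thm:alg-yields-eigdecomp-of-A}. The only point requiring care is the identification $\mathbb{P}_X = UU^*$: one must confirm that the reduced SVD truncated to rank $r$ yields a $U$ whose columns span exactly the image of $X$ (not a larger space), so that $UU^*$ is genuinely the projection onto $\operatorname{image}(X)$ rather than onto some superset. Since the reduced SVD retains precisely the $r$ left singular vectors corresponding to nonzero singular values, and $r$ is the rank of $X$, this holds, and the remaining computations are purely mechanical.
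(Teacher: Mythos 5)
Your proposal is correct and follows essentially the same route as the paper's proof: both identify $\mathbb{P}_X = UU^*$ and use the factorizations $A = BU^*$, $\tilde A = U^*B$ to reduce each claim to the eigenvalue relation $\tilde A w = \lambda w$. The only additions beyond the paper's argument are your explicit checks that $\hat\phi \ne 0$ and that $U$ spans exactly the image of $X$, which are worthwhile but minor.
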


\begin{proof}
    From the SVD $X=U\Sigma V^*$, the orthogonal projection onto the image of $X$ is given by $\mathbb{P}_X = U U^*$.
    Using~\eqref{eq:A-as-SVD-of-X} and~\eqref{eq:Atilde-as-B}, and recalling that $U^* U$
    is the identity matrix, we have
    \begin{align*}
        \mathbb{P}_X A \hat{\phi} &= (U U^*) (B U^*) (U w) = U (U^* B) w \\
        &= U\tilde A w = \lambda U w = \lambda \hat \phi.
    \end{align*}
    Then $\hat \phi$ is an eigenvector of $\mathbb{P}_X A$ with eigenvalue~$\lambda$.
    Moreover, if $\phi$ is given by~\eqref{eq:def-exact-DMD-mode}, then
    \begin{equation*}
        U^* \phi = \frac{1}{\lambda} U^* B w = \frac{1}{\lambda} \tilde A w = w,
    \end{equation*}
    so $\mathbb{P}_X \phi = U U^* \phi = U w = \hat\phi$.
\end{proof}
Thus, the modes $\hat\phi$ determined by Algorithm~\ref{alg:proj-DMD} are simply the projection of the modes determined by Algorithm~\ref{alg:exact-DMD} onto the range of~$X$.
Also, note that $U^*\phi = U^*\hat\phi = w$.

\begin{remark}
  \label{rem:exact-proj-equiv}
  If the vectors $\{y_k\}$ lie in the span of the vectors~$\{x_k\}$, then $\mathbb{P}_XA = A$, and the projected and exact DMD modes are identical.
  For instance, this is the case for a sequential time series (as in~\eqref{eq:def-XY-sequential}) when the last vector~$z_m$ is a linear combination of $\{z_0,\ldots,z_{m-1}\}$.
\end{remark}

In addition to providing an improved algorithm,~\cite{schmidJFM10}~also explores the connection between (projected) DMD and POD.
For data generated by linear dynamics $z_{k+1} = A z_{k}$,~\cite{schmidJFM10}~derives the relation $\tilde{A} = U^* A U$ and notes that we can interpret~$\tilde{A}$ as the correlation between the matrix of POD modes $U$ and the matrix of time-shifted POD modes~$A U$.
Of course,~$\tilde{A}$ also determines the DMD eigenvalues.
Exact DMD is based on the definition of $A$ as the least-squares/minimum-norm solution to $A X = Y$, but does not require that equation to be satisfied exactly.
Even so, we can combine \eqref{eq:A-as-SVD-of-X} and~\eqref{eq:Atilde-as-B} to find that $\tilde{A} = U^* A U$.
Thus exact DMD preserves the interpretation of $\tilde{A}$ in terms of POD modes, extending it from sequential time series to generic data matrices, and without making any assumptions about the dynamics relating $X$ and $Y$.


To further clarify the connections between exact and projected DMD, consider the following alternative algorithm for computing DMD eigenvalues and exact DMD modes:
\begin{algorithm}[Exact DMD, alternative method]
    \label{alg:big-svd}
    \mbox{}

    \begin{enumerate}
        \item Arrange the data pairs $\{(x_1,y_1),\ldots,(x_m,y_m)\}$ into matrices $X$ and~$Y$, as in~\eqref{eq:def-XY-general}.

        \item Compute the (reduced) SVD of $X$, writing $X = U \Sigma V^*$.

        \item Compute an orthonormal basis for the column space of  $\begin{bmatrix} X & Y \end{bmatrix}$, stacking the basis vectors in a matrix~$Q$ such that $Q^*Q=I$.
          For example, $Q$ can be computed by singular value decomposition of $\begin{bmatrix} X & Y \end{bmatrix}$, or by a $QR$ decomposition.

        \item Define the matrix
        \begin{equation}
            \tilde{A}_{Q} \triangleq Q^*AQ,
        \end{equation}
        where $A = Y V \Sigma^{-1} U^*$, as in~\eqref{eq:A-as-SVD-of-X}.

        \item Compute eigenvalues and eigenvectors of $\tilde{A}_{Q}$, writing $\tilde{A}_{Q} w = \lambda w$.
        Each nonzero eigenvalue~$\lambda$ is a DMD eigenvalue.

        \item The DMD mode corresponding to~$\lambda$ is then given by
        \begin{equation*}
            \phi \triangleq Q w.
        \end{equation*}

        \item If desired, the DMD modes can be scaled in a number of ways, as described in Appendix~\ref{sec:mode-scalings}.
    \end{enumerate}
\end{algorithm}

To see that $\phi$ computed by Algorithm~\ref{alg:big-svd} is an exact DMD mode, we first observe that because the columns of $Q$ span the column space of $Y$, we can write $Y = QQ^* Y$, and thus $A = Q Q^* A$.
Then we find that
\begin{align*}
    A \phi &= Q Q^* A \phi = Q Q^* A Q v = Q \tilde{A}_{Q} v = \lambda Q v = \lambda \phi.
\end{align*}

We emphasize that because the above algorithm requires both an SVD of $X$ and a QR decomposition of the augmented matrix $\begin{bmatrix} X & Y \end{bmatrix}$, it is more costly than Algorithm~\ref{alg:exact-DMD}, which should typically be used in practice.
However, Algorithm~\ref{alg:big-svd} proves useful in showing how exact DMD can be interpreted as a simple extension of the standard algorithm for projected DMD (see Algorithm~\ref{alg:proj-DMD}).
Recall that in projected DMD, one constructs the matrix $\tilde A = U^* A U$, where $U$ arises from the SVD of~$X$.
One then finds eigenvectors $w$ of $\tilde A$, and the projected DMD modes have the form $\hat\phi = Uw$.
Algorithm~\ref{alg:big-svd} is precisely analogous, with $U$ replaced by~$Q$: one constructs $\tilde A_Q = Q^*AQ$, finds eigenvectors~$v$ of $\tilde A_Q$, and the exact DMD modes have the form $\phi=Qw$.
In the case of a sequential time series, where $X$ and~$Y$ are given by~\eqref{eq:def-XY-sequential}, projected DMD projects $A$ onto the space spanned by the first $m$ vectors $\{z_0,\ldots,z_{m-1}\}$ (columns of~$X$), while exact DMD projects~$A$ onto the space spanned by all $m+1$ vectors $\{z_0,\ldots,z_m\}$ (columns of~$X$ and~$Y$).
In this sense, exact DMD is perhaps more natural, as it uses all of the data, rather than leaving out the last vector.

The case of a sequential time series is so common that it deserves special attention.
For this case, we provide a variant of Algorithm~\ref{alg:big-svd} that is computationally advantageous.

\begin{algorithm}[Exact DMD, sequential time series]
    \label{alg:exact-DMD-sequential}
    \mbox{}

    \begin{enumerate}
        \item Arrange the data $\{z_0,\ldots,z_m\}$ into matrices $X$ and~$Y$, as in~\eqref{eq:def-XY-sequential}.

        \item Compute the (reduced) SVD of $X$, writing $X = U \Sigma V^*$.

        \item Compute a vector~$q$ such that the columns of
          \begin{equation}
              \label{eq:def-Q-sequential}
              Q = \begin{bmatrix} U & q \end{bmatrix}
          \end{equation}
          form an orthonormal basis for $\{z_0,\ldots,z_m\}$.
          For instance, $q$ may be computed by one step of the Gram-Schmidt procedure applied to $z_m$:
          \begin{equation}
              p = z_m - U U^* z_m,\qquad q = \frac{p}{\|p\|}.
          \end{equation}
          (If $p = 0$, then take $Q=U$; in this case, exact DMD is identical to projected DMD, as discussed in Remark~\ref{rem:exact-proj-equiv}.)

        \item Define the matrix $\tilde A\triangleq U^* Y V \Sigma^{-1}$, as in~\eqref{eq:def-Atilde}.

        \item  Compute eigenvalues and eigenvectors of $\tilde{A}$, writing $\tilde{A} w = \lambda w$.
        Each nonzero eigenvalue~$\lambda$ is a DMD eigenvalue.

        \item The DMD mode corresponding to~$\lambda$ is then given by
        \begin{equation}
            \label{eq:exact-mode-correction}
            \phi \triangleq U w + \frac{1}{\lambda} qq^* B w,
        \end{equation}
        where $B=YV\Sigma^{-1}$.

        \item If desired, the DMD modes can be scaled in a number of ways, as described in Appendix~\ref{sec:mode-scalings}.
    \end{enumerate}
\end{algorithm}

Here, \eqref{eq:exact-mode-correction} is obtained from~\eqref{eq:def-exact-DMD-mode}, noting that $B=QQ^*B$, so
\begin{equation*}
    Bw = QQ^*Bw = UU^*Bw + qq^*Bw = \lambda Uw + qq^*Bw.
\end{equation*}
From Algorithm~~\ref{alg:exact-DMD-sequential}, we see that an exact DMD mode~$\phi$ can be viewed as a projected DMD mode~$\hat\phi$ (calculated from~\eqref{eq:def-proj-DMD-mode}) plus a correction (the last term in~\eqref{eq:exact-mode-correction}) that lies in the nullspace of~$A$.



\section{Applications}
\label{sec:applications}

In this section we discuss the practical implications of a DMD framework based on Definition~\ref{def:exact-DMD}.
Specifically, we extend DMD to nonsequential datasets using the fact that Definition~\ref{def:exact-DMD} places no constraints on the structure of the data matrices $X$ and $Y$.
This allows for novel temporal sampling strategies that we show to increase computational efficiency and mitigate the effects of noise, respectively.
We also present canonical examples that demonstrate the potential benefits and pitfalls of applying DMD to rank-deficient data.
Such computations are not discussed in the existing DMD literature, but can be understood in our linear algebra-based framework.

\subsection{Nonsequential sampling}
\label{ssec:nonseq-sampling}

In Section~\ref{ssec:new-def}, there were no assumptions made on the data contained in $X$ and $Y$.
However, DMD is typically applied to data that come from a dynamical system, for instance one whose evolution is given by
\begin{equation*}
    z \mapsto f(z),
\end{equation*}
with $z \in \mathbb{R}^n$.
Often, the data consist of direct measurements of the state $z$.
More generally, one could measure some function of the state $h(z)$ (see Section~\ref{ssec:Koopman}).

Consider a set of vectors $\{z_1, \ldots, z_m\}$.
These vectors need not comprise a sequential time series; they do not even need to be sampled from the same dynamical trajectory.
There is no constraint on how the vectors $z_k$ are sampled from the state space.
We pair each vector $z_k$ with its image under the dynamics $f(z_k)$.
This yields a set of data pairs $\{(z_1, f(z_1)), \ldots, (z_m, f(z_m))\}$.
Arranging these vectors as in~\eqref{eq:def-XY-general} yields matrices
\begin{equation}
    \label{eq:def-XY-nonseq-sampling}
    X \triangleq \begin{bmatrix} z_1 & \cdots & z_m \end{bmatrix},
    \qquad
    Y = \begin{bmatrix} f(z_1) & \cdots & f(z_m) \end{bmatrix},
\end{equation}
with which we can perform either projected or exact DMD.
The case of sequential time series, for which $z_{k+1} = f(z_k)$, is simply a special case.

\begin{remark}
    We observe that only the pairwise correspondence of the columns of $X$ and $Y$ is important, and not the overall ordering.
    That is, permuting the order of the columns of $X$ and~$Y$ has no effect on the matrix $A = Y X^\pinv$, or on the subsequent computation of DMD modes and eigenvalues, so long as the same permutation is applied to the columns of both~$X$ and~$Y$.
    This is true even for data taken from a sequential time series (i.e., $X$ and $Y$ as given in~\eqref{eq:def-XY-sequential}).
\end{remark}

Recall from Definition~\ref{def:exact-DMD} and Remark~\ref{rmk:best-fit-operator} that DMD can be interpreted as an analysis of the best-fit linear operator relating $X$ and $Y$.
This operator relates the columns of $X$ to those of $Y$ in a pairwise fashion.
For $X$ and $Y$ as in~\eqref{eq:def-XY-nonseq-sampling}, the columns of $X$ are mapped to those of $Y$ by the (generally) nonlinear map $f$.
Regardless, DMD provides eigenvalues and eigenvectors of the linear map $A = Y X^\pinv$.
Then we can interpret DMD as providing an analysis of the best-fit linear approximation to $f$.
If $X$ and $Y$ are linearly consistent, we can also interpret DMD using the formalism of Koopman operator theory (see Section~\ref{ssec:Koopman}).

The use of~\eqref{eq:def-XY-nonseq-sampling} in place of~\eqref{eq:def-XY-sequential} certainly offers greater flexibility in the sampling strategies that can be employed for DMD (see Sections~\ref{sssec:nonuniform-sampling} and~\ref{sssec:comb-mult-traj}).
However, it is important to note that for sequential time series, there exist memory-efficient variants of Algorithm~\ref{alg:proj-DMD}~\cite{tuJCP12,belsonACMTMS13}.
These improved algorithms take advantage of the overlap in the columns of $X$ and~$Y$ (when defined as in~\eqref{eq:def-XY-sequential}) to avoid redundant computations; the same strategies can be applied to Algorithms~\ref{alg:exact-DMD},~\ref{alg:big-svd}, and~\ref{alg:exact-DMD-sequential}.
This is not possible for the more general definitions of $X$ and~$Y$ given by~\eqref{eq:def-XY-general} and~\eqref{eq:def-XY-nonseq-sampling}.

\subsection{Examples}
\label{ssec:examples}

The following section presents examples demonstrating the utility of a DMD theory based on Definition~\ref{def:exact-DMD}.
The first two examples consider DMD computations involving rank-deficient datasets, which are not treated in the existing DMD literature.
We show that in some cases, DMD can still provide meaningful information about the underlying dynamical system, but in others, the results can be misleading.
The second two examples use the generalized approach described in Section~\ref{ssec:nonseq-sampling} to perform DMD analysis using nonsequential datasets.
First, we use nonuniform sampling to dramatically increase the efficiency of DMD computations.
Then, we concatenate time series taken from multiple runs of an experiment, reducing the effects of noise.

\subsubsection{Stochastic dynamics}

Consider a system with stochastic dynamics
\begin{equation}
    z_{k+1} = \lambda z_k + n_k,
    \label{eq:noisy_1d_sys}
\end{equation}
where each $z_k \in \mathbb{R}$.
We choose a decay rate $\lambda = 0.5$ and let $n_k$ be white noise with variance $\sigma^2 = 10$.
(This system was first used as a test of DMD in~\cite{wynnJFM12}.)
Figure~\ref{fig:noisy_1d_sys}~(left)  shows a typical trajectory for an initial condition $z_0 = 0$.
If we apply DMD to this trajectory, we estimate a decay rate $\tilde{\lambda} = 0.55$.
This is despite the fact that the nominal (noiseless) trajectory is simply given by $z_k = 0$; a global, linear analysis of the trajectory shown in Figure~\ref{fig:noisy_1d_sys}~(left) would identify a stationary process ($\tilde{\lambda} = 0$).

\begin{figure}[b]
    \centering
    \includegraphics{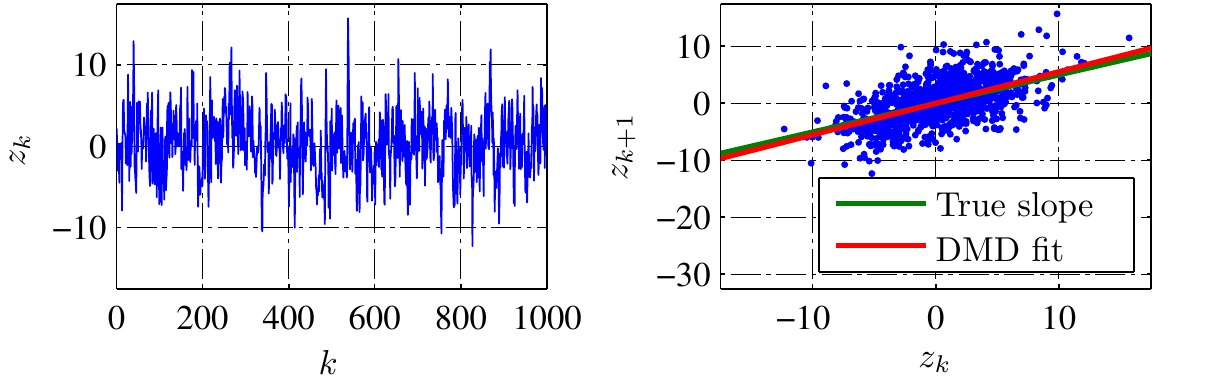}
    \caption{(Left) Typical trajectory of the noisy one-dimensional system governed by~\eqref{eq:noisy_1d_sys}.
    (Right) Scatter plot showing the correlation of $z_{k+1}$ and $z_k$.
    DMD is able to identify the relationship between future and past values of $z$ even though the dataset is rank-deficient.}
    \label{fig:noisy_1d_sys}
\end{figure}

Because the existing DMD literature focuses on high-dimensional systems, existing DMD theory deals primarily with time series whose elements are linearly independent.
As such, it cannot be applied to explain the ability of DMD to accurately estimate the dynamics underlying this noisy data (a rank-one time series).
Recalling Definition~\ref{def:exact-DMD}, we can interpret DMD in terms of a linear operator that relates the columns of a data matrix $X$ to those of $Y$, in column-wise pairs.
Figure~\ref{fig:noisy_1d_sys}~(right) shows the time series from Figure~\ref{fig:noisy_1d_sys}~(left) plotted in this pairwise fashion.
We see that though the data are noisy, there is clear evidence of a linear relationship between $z_k$ and $z_{k+1}$.
For rank-deficient data, DMD approximates the dynamics relating $X$ and $Y$ through a least-squares fit, and so it is no surprise that we can accurately estimate $\lambda$ from this time series.

\subsubsection{Standing waves}
\label{sssec:standing-wave}

Because each DMD mode has a corresponding DMD eigenvalue (and thus a corresponding growth rate and frequency), DMD is often used to analyze oscillatory behavior, whether the underlying dynamics are linear or nonlinear.
Consider data describing a standing wave:
\begin{equation}
    \label{eq:standing_wave}
    z_k = \cos(k\theta) q,\qquad k=0,\ldots,m,
\end{equation}
where $q$ is a fixed vector in $\mathbb{R}^n$.
For instance, such data can arise from the linear system
\begin{equation}
    \label{eq:standing-wave-sys}
    \begin{aligned}
        u_{k+1} &= (\cos\theta)u_k - (\sin\theta)v_k\\
        v_{k+1} &= (\sin\theta)u_k + (\cos\theta)v_k
    \end{aligned}
    \qquad (u_0,v_0) = (q,0),
\end{equation}
where $(u_k, v_k) \in \mathbb{R}^{2n}$.
If we measure only the state $u_k$, then we observe the standing wave~\eqref{eq:standing_wave}.
Such behavior can also arise in nonlinear systems, for instance by measuring only one component of a multi-dimensional limit cycle.

Suppose we compute DMD modes and eigenvalues from data satisfying~\eqref{eq:standing_wave}.
By construction, the columns of the data matrix $X$ will be spanned by the single vector~$q$.
As such, the SVD of $X$ will generate a matrix $U$ with a single column, and the matrix $\tilde{A}$ will be $1 \times 1$.
Then there will be precisely one DMD eigenvalue~$\lambda$.
Since $z$ is real-valued, then so is $\lambda$, meaning it captures only exponential growth/decay, and no oscillations.
This is despite the fact that the original data are known to oscillate with a fixed frequency.

What has gone wrong?
It turns out that, in this example, the data are not linearly consistent (see~Definition~\ref{def:linear-consistency}).
To see this, let $x$ and~$y$ be vectors with components $x_k=\cos(k\theta)$ and $y_k=\cos((k+1)\theta)$ (for $k=0,\ldots,m-1$), so the data matrices become
\begin{equation*}
  X = q x^T,\qquad Y = q y^T.
\end{equation*}
Then $X$ and~$Y$ are not linearly consistent unless $\theta$ is a multiple of~$\pi$.

For instance, the vector $a=(-\cos\theta,1,0,\ldots,0)$ is in $\nullspace(X)$, since $x^Ta=0$.
However, $y^*a=-\cos^2\theta+\cos 2\theta = \sin^2\theta$, so $a\notin\nullspace(Y)$ unless $\theta=j\pi$.
Also, note that if $\theta=\pi$, then the columns of~$X$ simply alternate sign, and in this case DMD yields the (correct) eigenvalue~$-1$.
As such, even though the data in this example arise from the linear system~\eqref{eq:standing-wave-sys}, there is no $A$ such that $Y=AX$ (by Theorem~\ref{thm:AX-eq-Y}), and DMD fails to capture the correct dynamics.
This example underscores the importance of linear consistency.

We note that in practice, we observe the same deficiency when the data do not satisfy~\eqref{eq:standing_wave} exactly, so long as the dynamics are dominated by such behavior (a standing wave).
Thus the presence of random noise, which may increase the rank of the dataset, does not alleviate the problem.
This is not surprising, as the addition of random noise should not enlarge the subspace in which the oscillation occurs.
However, if we append the measurement with a time-shifted value, i.e., performing DMD on a sequence of vectors $\begin{bmatrix} z_k^T & z_{k+1}^T \end{bmatrix}^T$, then the data matrices $X$ and~$Y$ become linearly consistent, and we are able to identify the correct oscillation frequency.
(See Section~\ref{ssec:era} for an alternative motivation for this approach.)

\subsubsection{Nonuniform sampling}
\label{sssec:nonuniform-sampling}

Systems with a wide range of time scales can be challenging to analyze.
If data are collected too slowly, dynamics on the fastest time scales will not be captured.
On the other hand, uniform sampling at a high frequency can yield an overabundance of data, which can prove challenging to deal with numerically.
Such a situation can be handled using the following sampling strategy:
\begin{equation}
    \label{eq:nonuniform_sampling}
    X \triangleq
    \begin{bmatrix}
        \vline & \vline & & \vline \\
        z_0 & z_P & \ldots & z_{(m-1)P} \\
        \vline & \vline & & \vline
    \end{bmatrix}, \qquad
    Y \triangleq
    \begin{bmatrix}
        \vline & \vline & & \vline \\
        z_1 & z_{P + 1} & \ldots & z_{(m-1)P + 1} \\
        \vline & \vline & & \vline
    \end{bmatrix},
\end{equation}
where we again assume dynamics of the form $z_{k+1} = f(z_k)$.
The columns of $X$ and $Y$ are separated by a single iteration of $f$, capturing its fastest dynamics.
However, the tuning parameter $P$ allows for a separation of time scales between the flow map iteration and the rate of data collection.

We demonstrate this strategy using a flow control example.
Consider the flow past a two-dimensional cylinder, which is governed by the incompressible Navier--Stokes equations.
We simulate the dynamics using the fast immersed boundary projection method detailed in~\cite{tairaJCP07,coloniusCMAME08}.
The (non-dimensionalized) equations of motion are
\begin{gather*}
    \frac{\partial \vec{u}}{\partial t} + (\vec{u} \cdot \nabla) \vec{u} = - \nabla p + \frac{1}{\text{Re}} \nabla^2 \vec{u} + \int_{\partial \mathcal{B}} \vec{f}(\vec{x}) \delta(\vec{x} - \vec{\xi}) \ d\vec{\xi} \\
    \nabla \cdot \vec{u} = 0,
\end{gather*}
where $\vec{u}$ is the velocity, $p$ is the pressure, and $\vec{x}$ is the spatial coordinate.
The Reynolds number $\text{Re} \triangleq U_\infty D / \nu$ is a nondimensional paramter defined by the freestream velocity $U_\infty$, the cylinder diameter $D$, and the kinematic viscosity $\nu$.
$\partial \mathcal{B}$ is the union of the boundaries of any bodies in the flow.
$\vec{f}$ is a boundary force that can be thought of a Lagrange multiplier used to enforce the no-slip boundary condition.
$\delta$ is the Dirac delta function.

We consider a cylinder with diameter $D = 1$ placed at the origin, in a flow with freestream velocity $U_\infty = 1$ and Reynolds number $\text{Re} = 100$.
Convergence tests show that our simulations are accurate for an inner-most domain with $(x, y) \in [15, 15] \times [-5, 5]$ and a $1500 \times 500$ mesh of uniformly-spaced points.
At this Reynolds number, the flow is globally unstable.
As a step towards computing a reduced-order model using BPOD~\cite{rowleyIJBC05}, we restrict the linearized dynamics to their stable subspace.
The system is actuated using a disk of vertical velocity downstream of the cylinder and sensed using localized measurements of vertical velocity placed along the flow centerline.
This setup is based on flow control benchmark proposed in~\cite{noackACC04} and is illustrated in Figure~\ref{fig:2d_cyl_sens_act_and_imp_resp}~(left).

\begin{figure}[t]
    \centering
    \includegraphics{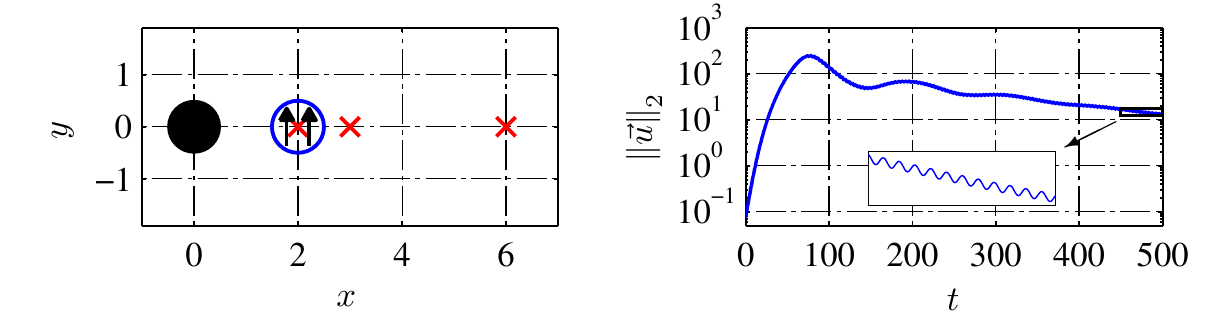}
    \caption{(Left) Schematic showing the placement of sensors ({\color{red}$\times$}) and actuators ({\color{blue}$\circ$}) used to control the flow past a two-dimensional cylinder.
    (Right) Kinetic energy of the corresponding impulse response (restricted to the stable subspace).
    After an initial period of non-normal growth, oscillations with both short and long time scales are observed.}
    \label{fig:2d_cyl_sens_act_and_imp_resp}
\end{figure}

The impulse response of this system is shown in Figure~\ref{fig:2d_cyl_sens_act_and_imp_resp}~(right).
We see that from $t = 200$ to $t = 500$, the dynamics exhibit both a fast and slow oscillation.
Suppose we want to identify the underlying frequencies and corresponding modes using DMD.
For instance, this would allow us to implement the more efficient and more accurate analytic tail variant of BPOD~\cite{tuJCP12}.
In order to capture the fast frequency, we must sample the system every 50 timesteps, with each timestep corresponding to $\Delta t = 0.02$.
(This is in order to satisfy the Nyquist--Shannon sampling criterion.)
As such, we let $z_k = z(50 k \Delta t)$.

Figure~\ref{fig:2d_cyl_dmd_spectrum} and Table~\ref{tbl:2d_cyl_dmd_eig_vals} compare the DMD eigenvalues computed using uniform sampling and nonuniform sampling.
(Referring back to \eqref{eq:nonuniform_sampling}, the former corresponds to $P = 1$ and the latter to $P = 10$.)
We see that the dominant eigenvalues agree, with less than 10\% error in all cases; we use the DMD eigenvalues computed with uniform sampling as truth values.
However, the larger errors occur for modes with norms on the order of $10^{-5}$, two orders of magnitude smaller than those of the dominant two DMD modes.
As such, these modes have negligible contribution to the evolution of the impulse response, and the error in the corresponding eigenvalues is not significant.
The dominant DMD modes show similar agreement, as seen in Figure~\ref{fig:2d_cyl_dmd_modes}.
This agreement is achieved despite using 90\% less data in the nonuniform sampling case, which corresponds to an 85.8\% reduction in computation time.

\begin{figure}[t]
    \centering
    \includegraphics{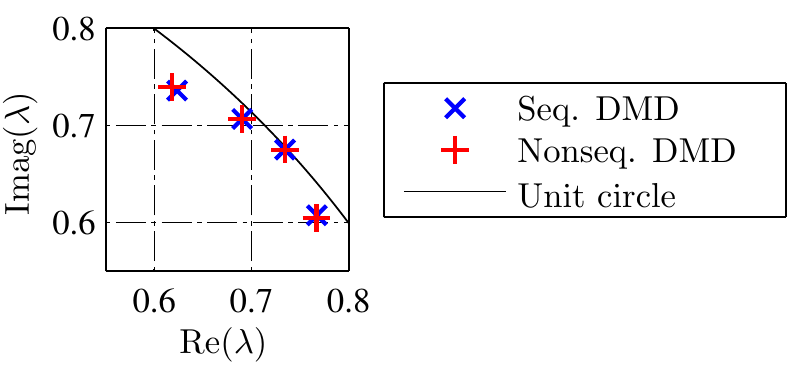}
    \caption{DMD estimates of the eigenvalues governing the decay of the impulse response shown in Figure~\ref{fig:2d_cyl_sens_act_and_imp_resp}~(right).
    The slowest decaying eigenvalues are captured well with both uniform sampling (sequential DMD) and nonuniform sampling (nonsequential DMD).}
    \label{fig:2d_cyl_dmd_spectrum}
\end{figure}

\subsubsection{Combining multiple trajectories}
\label{sssec:comb-mult-traj}

DMD is often applied to experimental data, which are typically noisy.
While filtering or phase averaging can be done to eliminate noise prior to DMD analysis, this is not always desirable, as it may remove features of the true dynamics.
In POD analysis, the effects of noise can be averaged out by combining multiple trajectories in a single POD computation.
We can take the same approach in DMD analysis using~\eqref{eq:def-XY-nonseq-sampling}.

\begin{table}[t]
    \centering
    \caption{Comparison of DMD eigenvalues$^*$}
    \begin{threeparttable}
        \begin{tabular}{ccccccc}
            \toprule
            \multicolumn{3}{c}{Frequency} & & \multicolumn{3}{c}{Growth rate} \\
            \cmidrule{1-3} \cmidrule{5-7}
            Seq. DMD & Nonseq. DMD & Error & & Seq. DMD &  Nonseq. DMD  & Error \\
            0.118 & 0.118 & 0.00\% & & 0.998 & 0.998 & 0.00\% \\
            0.127 & 0.127 & 0.01\% & & 0.988 & 0.988 & 0.00\% \\
            0.107 & 0.106 & 3.40\% & & 0.979 & 0.977 & 2.10\% \\
            0.138 & 0.139 & 7.50\% & & 0.964 & 0.964 & 0.05\% \\
            \bottomrule
        \end{tabular}
        \begin{tablenotes}
            \item[*]{\small{Row order corresponds to decreasing mode norm.}}
        \end{tablenotes}
    \end{threeparttable}
    \label{tbl:2d_cyl_dmd_eig_vals}
\end{table}

\begin{figure}[t]
    \centering
    \includegraphics{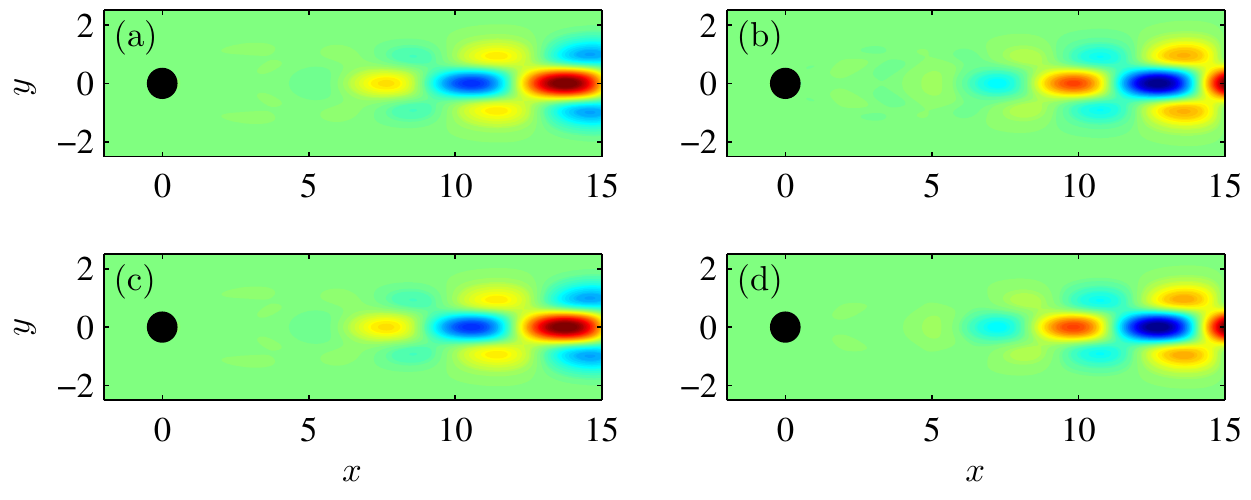}
    \caption{Comparison of dominant DMD modes computed from the impulse response shown in Figure~\ref{fig:2d_cyl_sens_act_and_imp_resp}~(right), illustrated using contours of vorticity.
    For each of the dominant frequencies, modes computed using nonuniform sampling (nonsequential DMD; bottom row) match those computed using uniform sampling (sequential DMD; top row).
    (For brevity, only the real part of each mode is shown; similar agreement is observed in the imaginary parts.)
    (a) $f = 0.118$, uniform sampling; (b) $f = 0.127$, uniform sampling; (c) $f = 0.118$, nonuniform sampling; (d) $f = 0.127$, nonuniform sampling.}
    \label{fig:2d_cyl_dmd_modes}
\end{figure}

Consider multiple dynamic trajectories, indexed by $j$: $\{z_k^j\}_{k=0}^{m_j}$.
These could be multiple runs of an experiment, or particular slices of a single, long trajectory.
(The latter might be useful in trying to isolate the dynamics of a recurring dynamic event.)
Suppose there are a total of $J$ trajectories.
DMD can be applied to the entire ensemble of trajectories by defining
\begin{equation}
    \label{eq:multiple_trajectories}
    \begin{array}{ccccccccccccc}
        X \hspace{-2.5mm} &
        \triangleq \hspace{-2.5mm} &
        \left[\begin{matrix} \vline \\ z_0^0 \\ \vline \end{matrix}\right. &
        \begin{matrix} \phantom{\vline} \\ \ldots \\ \phantom{\vline} \end{matrix} &
        \begin{matrix} \vline \\ z_{m_0-1}^0 \\ \vline \end{matrix} &
        \begin{matrix} \vline \\ z_0^1 \\ \vline \end{matrix} &
        \begin{matrix} \phantom{\vline} \\ \ldots \\ \phantom{\vline} \end{matrix} &
        \begin{matrix} \vline \\ z_{m_1-1}^1 \\ \vline \end{matrix} &
        \begin{matrix} \phantom{\vline} \\ \ldots \\ \phantom{\vline} \end{matrix} &
        \begin{matrix} \vline \\ z_0^J \\ \vline \end{matrix} &
        \begin{matrix} \phantom{\vline} \\ \ldots \\ \phantom{\vline} \end{matrix} &
        \begin{matrix} \vline \\ z_{m_J-1}^J \\ \vline \end{matrix} &
        \hspace{-4.1mm}\left.\phantom{\begin{matrix} \phantom{\vline} \\ \\ \phantom{\vline} \end{matrix}}\right],
        \\
        \\
        Y \hspace{-2.5mm}  &
        \triangleq \hspace{-2.5mm}  &
        \left[\begin{matrix} \vline \\ z_1^0 \\ \vline \end{matrix}\right. &
        \begin{matrix} \phantom{\vline} \\ \ldots \\ \phantom{\vline} \end{matrix} &
        \begin{matrix} \vline \\ z_{m_0}^0 \\ \vline \end{matrix} &
        \begin{matrix} \vline \\ z_1^1 \\ \vline \end{matrix} &
        \begin{matrix} \phantom{\vline} \\ \ldots \\ \phantom{\vline} \end{matrix} &
        \begin{matrix} \vline \\ z_{m_1}^1 \\ \vline \end{matrix} &
        \begin{matrix} \phantom{\vline} \\ \ldots \\ \phantom{\vline} \end{matrix} &
        \begin{matrix} \vline \\ z_1^J \\ \vline \end{matrix} &
        \begin{matrix} \phantom{\vline} \\ \ldots \\ \phantom{\vline} \end{matrix} &
        \begin{matrix} \vline \\ z_{m_J}^J \\ \vline \end{matrix} &
        \hspace{-4.1mm}\left.\phantom{\begin{matrix} \phantom{\vline} \\ \\ \phantom{\vline} \end{matrix}}\right].
    \end{array}
\end{equation}

We demonstrate this approach using experimental data from a bluff-body wake experiment.
A finite-thickness flat plate with an elliptical leading edge is placed in a uniform oncoming flow.
Figure~\ref{fig:bluff_body_wake_schematic} shows a schematic of the experimental setup.
We capture snapshots of the velocity field in the wake behind the body using a time-resolved particle image velocimetry (PIV) system.
(For details on the PIV data acquisition, see~\cite{tuEF13}.)
Multiple experimental runs are conducted, with approximately 1,400 velocity fields captured in each run.
This corresponds to the maximum amount of data that can be collected per run.

\begin{figure}[t]
    \centering
    \includegraphics[width=5in]{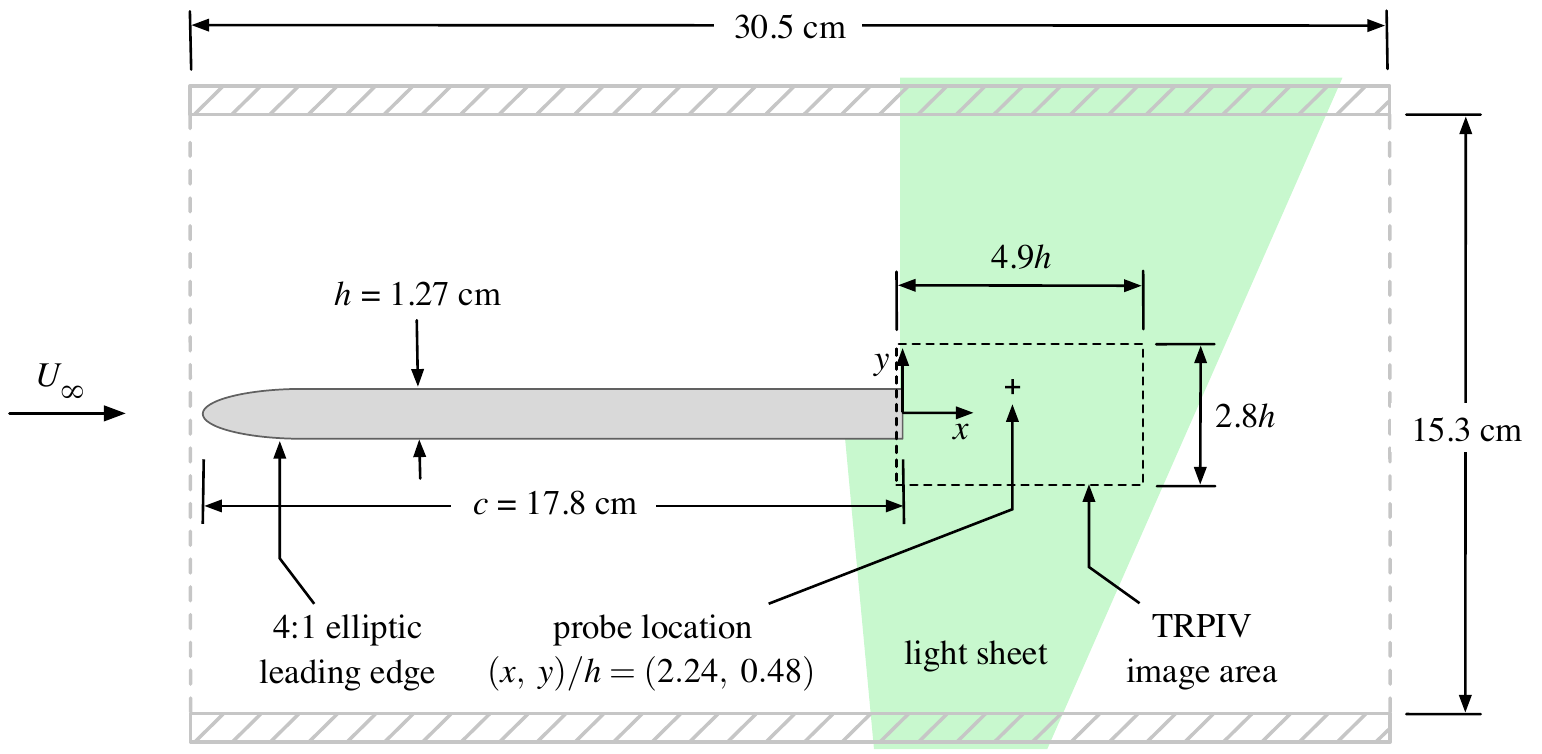}
    \caption{Schematic of setup for bluff-body wake experiment.}
    \label{fig:bluff_body_wake_schematic}
\end{figure}

Due to the high Reynolds number ($\text{Re} = 50,000$), the flow is turbulent.
As such, though we observe a standard von K\'{a}rm\'{a}n vortex street (see Figure~\ref{fig:bluff_body_wake_PIV}), the familiar vortical structures are contaminated by turbulent fluctuations.
Figure~\ref{fig:bluff_body_wake_dmd_spectra}~(left) shows a DMD spectrum computed using PIV data from a single experimental run.\footnote{Instead of simply plotting the mode norms against their corresponding frequencies, as is generally done, we first scale the mode norms by $\lambda^m$.
This reduces the height of spectral peaks corresponding to modes with large norm but quickly decaying eigenvalues.
For dynamics known to lie on an attractor, such peaks can be misleading; they do not contribute to the long-time evolution of the system.}
The spectrum is characterized by a harmonic set of peaks, with the dominant peak corresponding to the wake shedding frequency.
The corresponding modes are shown in Figure~\ref{fig:bluff_body_wake_dmd_modes_compare}~(a--c).
We see that the first pair of modes (see Figure~\ref{fig:bluff_body_wake_dmd_modes_compare}~(a)) exhibits approximate top-bottom symmetry, with respect to the centerline of the body.
The second pair of modes (see Figure~\ref{fig:bluff_body_wake_dmd_modes_compare}~(b)) shows something close to top-bottom antisymmetry, though variations in the vorticity contours make this antisymmetry inexact.
The third pair of modes (see Figure~\ref{fig:bluff_body_wake_dmd_modes_compare}~(c)) also shows approximate top-bottom symmetry, with structures that are roughly spatial harmonics of those seen in the first mode pair.

These modal features are to be expected, based on two-dimensional computations of a similar flow configuration~\cite{tuAIAA11}.
However, when computed from noise-free simulation data, the symmetry/antisymmetry of the modes is more exact.
Figures~\ref{fig:bluff_body_wake_dmd_spectra}~(right) and~\ref{fig:bluff_body_wake_dmd_modes_compare}~(d--g) show that when five experimental runs are used, the experimental DMD results improve, more closely matching computational results.
In the DMD spectrum (see Figure~\ref{fig:bluff_body_wake_dmd_spectra}~(right)), we again observe harmonic peaks, with a fundamental frequency corresponding to the shedding frequency.
The peaks are more isolated than those in Figure~\ref{fig:bluff_body_wake_dmd_spectra}~(left); in fact, we observe a fourth frequency peak, which is not observed in the single-run computation.
The modal structures, shown in Figure~\ref{fig:bluff_body_wake_dmd_modes_compare}~(d--g), display more obvious symmetry and antisymmetry, respectively.
The structures are also smoother and more elliptical.

\begin{figure}
    \centering
    \includegraphics{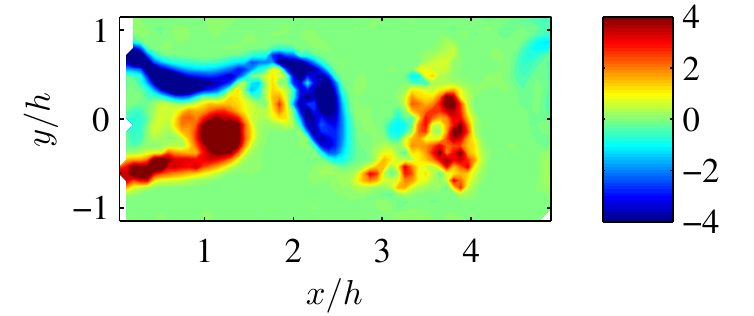}
    \caption{Typical vorticity field from the bluff-body wake experiment depicted in Figure~\ref{fig:bluff_body_wake_schematic}.
    A clear von K\'{a}rm\'{a}n vortex street is observed, though the flow field is contaminated by turbulent fluctuations.}
    \label{fig:bluff_body_wake_PIV}
\end{figure}

\begin{figure}
    \centering
    \includegraphics{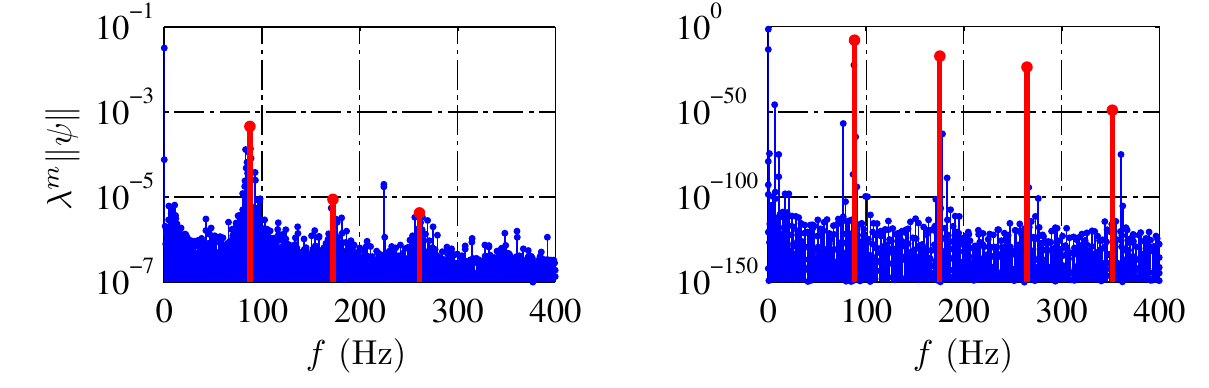}
    \caption{Comparison of DMD spectra computed using a single experimental run (left) and five experimental runs (right).
    When multiple runs are used, the spectral peaks are more isolated and occur at almost exactly harmonic frequencies.
    Furthermore, a fourth harmonic peak is identified; this peak is obscured in the single-run DMD computation.
    (Spectral peaks corresponding to modes depicted in Figure~\ref{fig:bluff_body_wake_dmd_modes_compare} are shown in red.)}
    \label{fig:bluff_body_wake_dmd_spectra}
\end{figure}

\begin{figure}
    \centering
    \includegraphics{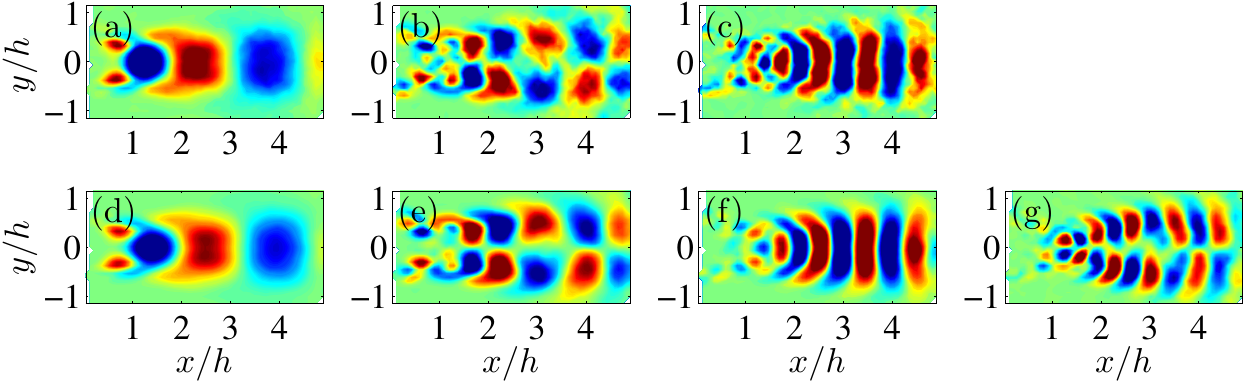}
    \caption{Representative DMD modes, illustrated using contours of vorticity.
    (For brevity, only the real part of each mode is shown.)
    The modes computed using multiple runs (bottom row) have more exact symmetry/antisymmetry and smoother contours.
    Furthermore, with multiple runs four dominant mode pairs are identified; the fourth spectral peak is obscured in the single-run computation (see Figure~\ref{fig:bluff_body_wake_dmd_spectra}).
    (a) $f = 87.75$~Hz, single run; (b) $f = 172.6$~Hz, single run; (c) $f = 261.2$~Hz, single run; (d) $f = 88.39$~Hz, five runs; (e) $f = 175.6$~Hz, five runs; (f) $f = 264.8$~Hz, five runs; (g) $f = 351.8$~Hz, five runs.}
    \label{fig:bluff_body_wake_dmd_modes_compare}
\end{figure}


\section{Connections to other methods}
\label{sec:connections}

In this section we discuss how DMD relates to other methods.
First, we show that our definition of DMD preserves, and even strengthens, the connections between DMD and Koopman operator theory.
Without these connections, the use of DMD to analyze nonlinear dynamics appears dubious, since there seems to be an underlying assumption of (approximately) linear dynamics (see Section~\ref{ssec:nonseq-sampling}), as in~\eqref{eq:lin-dyn}.
One might well question whether such an approximation would characterize a nonlinear system in a meaningful way.
However, so long as DMD can be interpreted as an approximation to Koopman spectral analysis, there is a firm theoretical foundation for applying DMD in analyzing nonlinear dynamics.

Second, we explore the links between DMD and the eigensystem realization algorithm (ERA).
The close relationship between the two methods provides motivation for the use of strategies from the ERA in DMD computations where rank is a problem.
Finally, we show that under certain assumptions, DMD is equivalent to linear inverse modeling (LIM), a method developed in the climate science community decades ago.
The link between the two methods suggests that practitioners of DMD may benefit from an awareness of the LIM literature, and vice versa.

\subsection{Koopman spectral analysis}
\label{ssec:Koopman}

We briefly introduce the concepts of Koopman operator theory below and discuss how they relate to the theory outlined in Section~\ref{sec:theory}.
The connections between Koopman operator theory and projected DMD were first explored in~\cite{rowleyJFM09}, but only in the context of sequential time series.
Here, we extend those results to more general datasets, doing so using exact DMD (defined in Section~\ref{ssec:new-def}).

Consider a discrete-time dynamical system
\begin{equation}
    \label{eq:nonlin-dyn}
    z \mapsto f(z),
\end{equation}
where $z$ is an element of a finite-dimensional manifold~$M$.
The Koopman operator $\mathcal{K}$ acts on scalar functions $g:M \rightarrow \mathbb{R}$~or~$\mathbb{C}$, mapping $g$ to a new function $\mathcal{K}g$ given by
\begin{equation}
    \label{eq:def_Koopman}
    \mathcal{K}g(z) \triangleq g\big(f(z)\big).
\end{equation}
Thus the Koopman operator is simply a composition or pull-back operator.
We observe that $\mathcal{K}$ acts linearly on functions $g$, even though the dynamics defined by~$f$ may be nonlinear.

As such, suppose $\mathcal{K}$ has eigenvalues $\lambda_j$ and eigenfunctions
$\theta_j$, which satisfy
\begin{equation}
    \label{eq:def_Koopman_eigfunc}
    \mathcal{K} \theta_j(z) = \lambda_j \theta_j(z), \quad j = 0,1, \ldots
\end{equation}
An \emph{observable} is simply a function on the state space~$M$.
Consider a vector-valued observable $h:M \rightarrow \mathbb{R}^n$~or~$\mathbb{C}^n$, and expand~$h$ in terms of these eigenfunctions as
\begin{subequations}
    \label{eq:Koopman_dyn}
\begin{equation}
    \label{eq:Koopman_dyn_a}
    h(z) = \sum_{j=0}^\infty \theta_j(z) \tilde{\phi}_j,
\end{equation}
where $\tilde{\phi}_j \in \mathbb{R}^n$~or~$\mathbb{C}^n$.
(We assume each component of $h$ lies in the span of the eigenfunctions.)
We refer to the vectors $\tilde{\phi}_j$ as \emph{Koopman modes} (after~\cite{rowleyJFM09}).

Applying the Koopman operator, we find that
\begin{equation}
    \label{eq:Koopman_dyn_b}
    h\big(f(z)\big) = \sum_{j=0}^\infty \lambda_j \theta_j(z) \tilde{\phi}_j.
\end{equation}
\end{subequations}
For a sequential time series, we can repeatedly apply the Koopman operator (see~\eqref{eq:def_Koopman} and~\eqref{eq:def_Koopman_eigfunc}) to find
\begin{equation}
    \label{eq:Koopman_dyn_seq}
    h(f^k(z)) = \sum_{j=0}^\infty \lambda_j^k \theta_j(z) \tilde{\phi}_j.
\end{equation}
A set of modes $\{\tilde{\phi}_j\}$ and eigenvalues $\{\lambda_j\}$ must satisfy~\eqref{eq:Koopman_dyn} in order to be considered Koopman modes and eigenvalues.

Now consider a set of arbitrary initial states $\{z_0,z_1,\ldots,z_{m-1}\}$ (not necessarily a trajectory of the dynamical system~\eqref{eq:nonlin-dyn}), and let
\begin{equation*}
  x_k = h(z_k),\qquad y_k=h(f(z_k)).
\end{equation*}
As before, construct data matrices $X$ and~$Y$, whose columns are $x_k$ and $y_k$.
This is similar to~\eqref{eq:def-XY-nonseq-sampling}, except that here we measure an observable, rather than the state itself.
As long as the data matrices $X$ and~$Y$ are linearly consistent (see Definition~\ref{def:linear-consistency}), we can determine a relationship between DMD modes and Koopman modes, as follows.

Suppose we compute DMD modes from $X$ and $Y$, giving us
eigenvectors and eigenvalues that satisfy
\begin{equation*}
    A \phi_j = \lambda_j \phi_j,
\end{equation*}
where $A=YX^\pinv$.  If the matrix~$A$ has a full set of eigenvectors, then we can expand each column
$x_k$ of~$X$ as
\begin{subequations}
    \label{eq:dmd_reconstruction}
\begin{equation}
    h(z_k) = x_k = \sum_{j=0}^{n-1} c_{jk} \phi_j,
\end{equation}
for some constants $c_{jk}$.
For linearly consistent data, we have $A x_k = y_k$, by Theorem~\ref{thm:AX-eq-Y},
so
\begin{align}
    h\big(f(z_k)\big) &= y_k = A x_k \notag \\
    &= \sum_{j=0}^{n-1} c_{jk} A \phi_j \notag \\
    &= \sum_{j=0}^{n-1} \lambda_j c_{jk} \phi_j.
\end{align}
\end{subequations}

Comparing to~\eqref{eq:Koopman_dyn}, we see that in the case of linearly consistent data matrices (and diagonalizable~$A$), the DMD modes correspond to Koopman modes, and the DMD eigenvalues to Koopman eigenvalues, where the constants $c_{jk}$ are given by $c_{jk}=\theta_j(z_k)$.
(We note, however, that we typically do not know the states $z_k$, nor do we know the Koopman eigenfunctions $\theta_j$.)
If the data arise from a sequential time series, we can scale the modes to subsume the constants $c_{jk}$, following~\cite{chenJNLS12}.
(For more details see Appendix~\ref{sec:mode-scalings}.)

The relationship between Koopman modes and DMD modes is similar to that established in~\cite{rowleyJFM09} (and discussed further in~\cite{chenJNLS12}), but the present result differs in some important respects.
First, the result in~\cite{rowleyJFM09} uses projected DMD modes (see Section~\ref{sec:standard-def}) and requires a sequential time series; here, we use exact DMD modes (see Section~\ref{ssec:new-def}) and do not require a sequential time series.
Furthermore,~\cite{rowleyJFM09} assumes the vectors~$x_k$ are linearly independent; here, we impose the weaker requirement of linear consistency.

We note that when $n$ is large, it may be impractical to compute all of the DMD modes.
For instance, when $n \gg m$, $A$ will have a large nullspace, and one might use Algorithm~\ref{alg:exact-DMD} to compute only those DMD modes with nonzero eigenvalues.
Suppose the rank of $A$ is $r$ and we write
\begin{subequations}
    \label{eq:partitioned_dmd_reconstruction}
    \begin{equation}
        x_k = \sum_{j=0}^{r-1} c_{jk} \phi_j + \sum_{j=r}^{n-1} c_{jk} \phi_j,
    \end{equation}
    where the first sum contains only DMD modes with nonzero eigenvalues.
    We still have
    \begin{equation}
        y_k = \sum_{j=0}^{r-1} \lambda_j c_{jk} \phi_j,
    \end{equation}
\end{subequations}
since all modes in the second sum have zero eigenvalues.
As such, those modes don't contribute to the dynamics, and we can neglect those vectors as an error term that gets projected out by the dynamics.
Contrast this with the DMD reconstruction of a sequential time series using projected DMD, where the residual instead appears at the end of the time series (see 3.13--3.14 in~\cite{rowleyJFM09}).

Though the Koopman analogy provides a firm mathematical foundation for applying DMD to data generated by nonlinear systems, it is limited by the fact that it relies on~\eqref{eq:dmd_reconstruction}.
The derivation of this equation requires making a number of assumptions, namely that the data are linearly consistent and that the matrix~$A$ has a full set of eigenvectors (e.g., this holds when the eigenvalues of~$A$ are distinct).
When these assumptions do not hold, there is no guarantee that DMD modes will closely approximate Koopman modes.
For instance, in some systems DMD modes and eigenvalues closely approximate those of the Koopman operator near an attractor, but not far from it~\cite{bagheriJFM13}.
DMD may also perform poorly when applied to dynamics whose Koopman spectral decomposition contains Jordan blocks~\cite{bagheriJFM13}.
In contrast, an understanding of DMD built on Definition~\ref{def:exact-DMD} holds even when these conditions break down.

\subsection{The eigensystem realization algorithm (ERA)}
\label{ssec:era}

The ERA is a control-theoretic method for system identification and model reduction~\cite{hoACCST65,juangJGCD85,maTCFD11}.
Applicable to linear systems, the ERA takes input-output data and from them computes a minimal realization of the underlying dynamics.
In this section, we show that while DMD and the ERA were developed in different contexts, they are closely related: the low-order linear operators central to each method are related by a similarity transform.
This connection suggests that strategies used in ERA computations could be leveraged for gain in DMD computations.
Specifically, it provides a motivation for appending the data matrices $X$ and $Y$ with time-shifted data to overcome rank problems (as suggested in Section~\ref{sssec:standing-wave}).

Consider the linear, time-invariant system
\begin{equation} \begin{aligned}
    x_{k+1} &= A x_k + B u_k  \\
    y_k &= C x_k + D u_k,
    \label{eq:era_sys}
\end{aligned} \end{equation}
where $x \in \mathbb{R}^n$, $u \in \mathbb{R}^p$, and $y \in \mathbb{R}^q$.
(The matrix $A$ defined here is not necessarily related to the one defined in Section~\ref{sec:theory}.)
We refer to $x$ as the state of the system, $u$ as the input, and $y$ as the output.

The goal of the ERA is to identify the dynamics of the system~\eqref{eq:era_sys} from a time history of $y$.
Specifically, in the ERA we sample outputs from the impulse response of the system.
We collect the sampled outputs in two sets
\begin{equation*} \begin{gathered}
    \mathcal{H} \triangleq \left\{CB, C A^P B, \ldots, C A^{(m-1)P} B \right\} \\
    \mathcal{H}' \triangleq \left\{CAB, C A^{P+1} B, \ldots, C A^{(m-1)P+1} B \right\},
\end{gathered} \end{equation*}
where we sample the impulse response every $P$ steps.
The elements of $\mathcal{H}$ and $\mathcal{H}'$ are commonly referred to as Markov parameters of the system~\eqref{eq:era_sys}.

We then form the Hankel matrix by stacking the elements of $\mathcal{H}$ as
\begin{equation}
    H \triangleq \begin{bmatrix} C B & C A^P B & \ldots & C A^{m_c P} B \\ C A^P B & C A^{2P} B & \ldots & C A^{(m_c +1) P} B \\ \vdots & \vdots & \ddots & \vdots \\ C A^{m_o P} B & C A^{(m_o + 1)P} B & \ldots & C A^{(m_c + m_o)P} B \end{bmatrix},
    \label{eq:def-Hankel-1}
\end{equation}
where $m_c$ and $m_o$ can be chosen arbitrarily, subject to $m_c + m_o = m - 1$.
The time-shifted Hankel matrix is built from the elements of $\mathcal{H}'$ in the same way:
\begin{equation}
    H' \triangleq \begin{bmatrix} C A B & C A^{P+1} B & \ldots & C A^{m_c P + 1} B \\ C A^{P+1} B & C A^{2P+1} B & \ldots & C A^{(m_c + 1) P + 1} B \\ \vdots & \vdots & \ddots & \vdots \\ C A^{m_o P + 1} B & C A^{(m_o + 1)P + 1} B & \ldots & C A^{(m_c + m_o)P + 1} B
    \end{bmatrix}.
    \label{eq:def-Hankel-2}
\end{equation}

Next, we compute the (reduced) SVD of $H$, giving us
\begin{equation*}
    H = U \Sigma V^*.
\end{equation*}
Let $U_r$ consist of the first $r$ columns of $U$.
Similarly, let $\Sigma_r$ be the upper left $r \times r$ submatrix of $\Sigma$ and let $V_r$ contain the first $r$ columns of $V$.
Then the $r$-dimensional ERA approximation of~\eqref{eq:era_sys} is given by the reduced-order system
\begin{equation} \begin{aligned}
    \xi_{k+1} &= A_r \xi_k + B_r u_k \\
    \eta_k &= C_r \xi_k + D_r u_k,
    \label{eq:era-reduced-sys}
\end{aligned} \end{equation}
where
\begin{align}
    A_r &= \Sigma_r^{-1/2} U_r^* H' V_r \Sigma_r^{-1/2} \label{eq:def-Ar}\\
    B_r &= \text{the first $p$ columns of } \Sigma_r^{1/2} V_r^* \notag \\
    C_r &= \text{the first $q$ rows of } U_r \Sigma_r^{1/2} \notag \\
    D_r &= D \notag.
\end{align}

Now, suppose we use $H$ and $H'$ to compute DMD modes and eigenvalues as in Algorithm~\ref{alg:exact-DMD}, with $X=H$ and $Y=H'$.
Recall from~\eqref{eq:low-order-eig-prob} that the DMD eigenvalues and modes are determined by the eigendecomposition of the operator
\begin{equation*}
    \tilde{A} = U^* H' V \Sigma^{-1},
\end{equation*}
with $U$, $\Sigma$, and $V$ defined as above.
Comparing with~\eqref{eq:def-Ar}, we see that if one keeps all of the modes from ERA (i.e., choosing $r$ equal to the rank of~$H$), then $A_r$ and~$\tilde A$ are related by a similarity transform, with
\begin{equation*}
    A_r = \Sigma^{-1/2}\tilde A \Sigma^{1/2},
\end{equation*}
so $A_r$ and~$\tilde{A}_r$ have the same eigenvalues.  Furthermore, if $v$ is an eigenvector of $A_r$, with
\begin{equation*}
    A_r v = \lambda v,
\end{equation*}
then $w=\Sigma^{1/2} v$ is an eigenvector of $\tilde A$, since
\begin{equation*}
  \tilde A w = \tilde A \Sigma^{1/2} v = \Sigma^{1/2} A_r v = \lambda\Sigma^{1/2} v = \lambda w.
\end{equation*}
Then $w$ can be used to construct either the exact or projected DMD modes (see~\eqref{eq:def-exact-DMD-mode} and~\eqref{eq:def-proj-DMD-mode}).

We see that algorithmically, the ERA and DMD are closely related: given two matrices $H$ and $H'$, applying the ERA produces a reduced-order operator $A_r$ that can be used to compute DMD modes and eigenvalues.
However, the ERA was originally developed to characterize linear input-output systems, for which~\eqref{eq:era-reduced-sys} approximates~\eqref{eq:era_sys}.
(In practice, it is often applied to experimental data, for which the underlying dynamics may be only approximately linear.)
On the other hand, DMD is designed to analyze data generated by any dynamical system; the system can be nonlinear and may have inputs (e.g., $x_{k+1} = f(x_k, u_k)$) or not (e.g., $x_{k+1} = f(x_k)$).

We note that in the ERA, we collect two sets of data $\mathcal{H}$ and $\mathcal{H}'$, then arrange that data in matrices $H$ and $H'$.
In doing so we are free to choose the values of $m_c$ and $m_o$, which determine the shapes of $H$ and $H'$ in~\eqref{eq:def-Hankel-1} and~\eqref{eq:def-Hankel-2}.
Interpreting DMD using the Koopman formalism of Section~\ref{ssec:Koopman}, each column of $H$ corresponds to a particular value of an observable, where the observable function is a vector of outputs at $m_o+1$ different timesteps.
Each column of the matrix~$H'$ then contains the value of this observable at the next timestep.
A more typical application of DMD would use $m_o = 0$ and $m_c = m - 1$.
Allowing for $m_o > 0$ is equivalent to appending the data matrices with rows of time-shifted Markov parameters.
Doing so can enlarge the rank of $H$ and $H'$, increasing the accuracy of the reduced-order system~\eqref{eq:era-reduced-sys} for the ERA.
For DMD, it can overcome the rank limitations that prevent the correct characterization of standing waves (as suggested in Section~\ref{sssec:standing-wave}).

\subsection{Linear inverse modeling (LIM)}
\label{ssec:LIM}

In this section, we investigate the connections between DMD and LIM.
To set up this discussion, we briefly introduce and define a number of terms used in the climate science literature.
This is followed by a more in-depth description of LIM.
Finally, we show that under certain conditions, LIM and DMD are equivalent.

\subsubsection{Nomenclature}

\emph{Empirical orthogonal functions} (EOFs) were first introduced in 1956 by Lorenz~\cite{lorenzMITTR56}.
Unique to the climate science literature, EOFs simply arise from the application of principal component analysis (PCA)~\cite{pearsonPhilMag01,hotellingJEdPsy33_1,hotellingJEdPsy33_2} to meteorological data~\cite{penlandMWR89}.
As a result, EOF analysis is equivalent to PCA, and thus also to POD and SVD.
(We note that in PCA and and EOF analysis, the data mean is always subtracted, so that the results can be interpreted in terms of variances; this is often done for POD as well.)

In practice, EOFs are often used as a particular choice of \emph{principal interaction patterns} (PIPs), a concept introduced in 1988 by Hasselmann~\cite{hasselmannJGRA88}.
The following discussion uses notation similar to that found in~\cite{vonstorchJClimate95}, which provides a nice review of PIP concepts.
Consider a dynamical system with a high-dimensional state $x(t) \in \mathbb{R}^n$.
In some cases, such a system may be approximately driven by a lower-dimensional system with state $z(t) \in \mathbb{R}^r$, where $r < n$.
To be precise, we say that $x$ and $z$ are related as follows:
\begin{align*}
    z_{k+1} &= F(z_k; \alpha) + \text{noise} \\
    x_k &= Pz_k + \text{noise},
\end{align*}
where $\alpha$ is a vector of parameters.
From this we see that given a knowledge of $z$ and its dynamics, $x$ is completely specified by the static map $P$, aside from the effects of noise.
Though in general $P$ cannot be inverted, given a measurement of $x$, we can approximate $z$ using a least-squares fit:
\begin{equation*}
    z_k = (P^T P)^{-1}P^T x_k.
\end{equation*}

In climate science, the general approach of modeling the dynamics of a high-dimensional variable $x$ through a lower-dimensional variable $z$ is referred to as \emph{inverse modeling}.
The inverse model described above requires definitions of $F$, $P$, and $\alpha$.
Generally, $F$ is chosen based on physical intuition.
Once that choice is made, $P$ and $\alpha$ are fitted simultaneously.
The PIPs are the columns of $P$ for the choice of $P$ (and $\alpha$) that minimizes the error
\begin{equation*}
    \epsilon(P, \alpha) \triangleq E\bigg( \Big\|x_{k+1} - x_k - P\Big(F(z_k; \alpha) - z_k\Big) \Big\| \bigg),
\end{equation*}
where $E$ is the expected value operator~\cite{vonstorchJClimate95}.
In general, the choice of $P$ is not unique.

Hasselmann also introduced the notion of \emph{principal oscillation patterns} (POPs) in his 1988 paper~\cite{hasselmannJGRA88}.
Again, we use the notation found in~\cite{vonstorchJClimate95}.
Consider a system with unknown dynamics.
We assume that we can approximate these dynamics with a linear system
\begin{equation*}
    x_{k+1} = A x_k + \text{noise}.
\end{equation*}
If we multiply both sides by $x_k^T$ and take expected values, we can solve for $A$ as
\begin{equation}
    A = E(x_{k+1} x_k^T) E(x_k x_k^T)^{-1}.
    \label{eq:POP_operator}
\end{equation}
The eigenvectors of $A$ are referred to as POPs.
That is, POPs are eigenvectors of a particular linear approximation of otherwise unknown dynamics.

Even within the climate science literature, there is some confusion between PIPs and POPs.
This is due to the fact that POPs can be considered a special case of PIPs.
In general, PIPs are basis vectors spanning a low-dimensional subspace useful for reduced-order modeling.
Suppose we model our dynamics with the linear approximation described above, and do not reduce the order of the state.
If we then express the model in its eigenvector basis, we are choosing our PIPs to be POPs.

\subsubsection{Linear Markov models/linear inverse modeling (LIM)}

In 1989, Penland derived a method for computing a linear, discrete-time system that approximates the trajectory of a stochastic, continuous-time, linear system, which he referred to as a linear Markov model~\cite{penlandMWR89}.
We describe this method, which came to be known as LIM, using the notation found in~\cite{penlandJClimate93}.
Consider an $n$-dimensional Markov process
\begin{equation*}
    \frac{dx}{dt} = B x(t) + \xi(t),
\end{equation*}
where $\xi(t)$ is white noise with covariance
\begin{equation*}
    Q = E\left(\xi(t) \xi^T(t) \right).
\end{equation*}
We assume the mean of the process has been removed.
The covariance of $x$ is given by
\begin{equation}
    \label{eq:LIM_Lambda}
    \Lambda = E\left(x(t) x^T(t)\right).
\end{equation}
One can show that the following must hold:
\begin{gather*}
    B \Lambda + \Lambda B^T + Q = 0 \\
    E\left(x(t+\tau)x^T(t)\right) = \exp(B\tau) \Lambda.
\end{gather*}
(See~\cite{penlandMWR89} for details.)

Defining the Green's function
\begin{align}
    \label{eq:LIM_G}
    G(\tau) &\triangleq \exp(B\tau) \notag \\
    &= E\left(x(t+\tau)x^T(t)\right) \Lambda^{-1},
\end{align}
we can say that given a state $x(t)$, the most probable state time $\tau$ later is
\begin{equation*}
    x(t+\tau) = G(\tau) x(t).
\end{equation*}
The operator $G(\tau)$ is computed from snapshots of the continuous-time system and has the same form as the linear approximation used in POP analysis (see~\eqref{eq:POP_operator}).
We note that we arrive at the same model if we apply linear stochastic estimation to snapshots of the state $x$, taking $x(t)$ and $x(t+\tau)$ to be the unconditional and conditional variables, respectively.
(This was done in~\cite{tuEF13} to identify a model for the evolution of POD coefficients in a fluid flow.)

When this approach is applied to a nonlinear system, it can be shown that $G(\tau)$ is equivalent to a weighted average of the nonlinear dynamics, evaluated over an ensemble of snapshots~\cite{blumenthalJClimate91}.
This is in contrast to a typical linearization, which involves evaluating the Jacobian of the dynamics at a fixed point.
If the true dynamics are nearly linear, these two approaches will yield nearly the same model.
However, if nonlinear effects are significant, $G(\tau)$ will be closer to the ensemble average, and arguably a better model than a traditional linearization~\cite{blumenthalJClimate91}.

In~\cite{penlandJClimate93}, this method was applied to compute a linear Markov model in the space of EOF coefficients.
This is an example of inverse modeling (equivalently, PIP analysis); a high-dimensional variable is modeled via a projection onto a lower-dimensional EOF subspace.
Due to the assumption of linear dynamics, this approach came to be known as \emph{linear inverse modeling}.
The combination of PIP and POP concepts in this early work has contributed to the continuing confusion between PIPs and POPs in the climate science literature today.

\subsubsection{Equivalence to projected DMD}

In both exact and projected DMD, the DMD eigenvalues are given by the eigenvalues of the projected linear operator $\tilde{A}$ (see~\eqref{eq:def-Atilde}).
The projected DMD modes are computed by lifting the eigenvectors of $\tilde{A}$ to the original space via the left singular vectors $U$ (see~\eqref{eq:def-proj-DMD-mode}).
In~\cite{penlandJClimate93}, the eigendecomposition of a low-order linear model $G(\tau)$ is computed and the low-order eigenvectors lifted to the original space via EOFs, in the same way as in projected DMD.
The similarity in these two approaches is obvious.
Recall that left singular vectors and EOFs are equivalent, so long as they are computed from the same data.
Then to prove that LIM-based eigenvector analysis is equivalent to projected DMD, we simply have to show the equivalence of $G(\tau)$ and $\tilde{A}$.

Consider two $n \times m$ data matrices $X$ and $Y$, with columns $x_j = x(t_j$) and $y_j = x(t_j + \tau)$, respectively.
$X$ and $Y$ may or may not share columns.
As in~\eqref{svd-of-X}, we assume that the EOFs to be used for LIM are computed from $X$ alone, giving us
\begin{equation*}
    X = U \Sigma V^*,
\end{equation*}
where the columns of $U$ are the EOFs.
The EOF coefficients of $X$ and $Y$ are given by
\begin{equation}
    \label{eq:def_Xhat}
    \hat{X} = U^* X, \qquad \hat{Y} = U^* Y,
\end{equation}
whose columns we donote by $\hat{x}_j$ and $\hat{y}_j$, respectively.


In order to show that $G(\tau)$ and $\tilde{A}$ are equivalent, we must reduce~\eqref{eq:LIM_G} to~\eqref{eq:def-Atilde}.
Because we are interested in the equivalence of LIM and projected DMD when the former is performed in the space of EOF coefficients, we replace all instances of $x$ in~\eqref{eq:LIM_Lambda} and~\eqref{eq:LIM_G} with $\hat{x}$.
Recall that the expected value of $a$, for an ensemble $\{a_j\}_{j=0}^{m-1}$, is given by
\begin{equation}
    E\left(a_j\right) \triangleq \frac{1}{m}\sum_{j=0}^{m-1} a_j.
\end{equation}
Then we can rewrite~\eqref{eq:LIM_Lambda} as
\begin{align*}
    \Lambda &= \frac{1}{m} \sum_{j=0}^{m-1} \hat{x}_j \hat{x}_j^* \\
    &= \frac{1}{m} \hat{X} \hat{X}^* \\
    &= \frac{1}{m} U^* X X^* U \\
    &= \frac{1}{m} U^* U \Sigma^2 \\
    &= \frac{1}{m} \Sigma^2,
\end{align*}
using the fact that $X X^* U = U \Sigma^2$, by the definition of left singular vectors.
This result, along with~\eqref{svd-of-X}, allows us to rewrite~\eqref{eq:LIM_G} as
\begin{align*}
    G(\tau) &= \left(\frac{1}{m} \sum_{j=0}^{m-1} \hat{y}_j \hat{x}_j^*\right) \left(m \Sigma^{-2}\right) \notag \\
    &= \hat{Y} \hat{X}^* \Sigma^{-2} \\
    &= U^* Y X^* U \Sigma^{-2} \\
    &= U^* Y V \Sigma U^* U \Sigma^{-2} \\
    &= U^* Y V \Sigma^{-1}.
\end{align*}
(Recall that $x(t_j+\tau) = y_j$, and $x(t_j) = x_j$.)
From~\eqref{eq:def-Atilde}, we then have $G(\tau) = \tilde{A}$, and we see that DMD and LIM are built on the same low-dimensional, approximating linear dynamics.

We emphasize that this equivalence relies on a number of assumptions.
First, we assume that we perform LIM in the space of EOF coefficients.
Second, we assume that the EOFs are computed from $X$ alone.
This may not be an intuitive choice if $X$ and $Y$ are completely distinct, but for a sequential snapshot sequence where $X$ and $Y$ differ by a single column, this is not a significant difference.

Given these assumptions, the equivalence of projected DMD and LIM gives us yet another way to interpret DMD analysis.
If the data mean is removed, then the low-order map that generates the DMD eigenvalues and eigenvectors is simply the one that yields the statistically most likely state in the future.
(This is the case for both exact and projected DMD, as both are built on the same low-order linear map.)
In a small sense, the DMD framework is more general, as the intrepretation provided by Definition~\ref{def:exact-DMD} holds even for data that are not mean-subtracted.
Then again, in LIM the computation of the EOFs is completely divorced from the modeling procedure, allowing for a computation using both $X$ and $Y$.
Nevertheless, the similarities between the two methods suggests that practitioners of DMD would be well-served in studying and learning from the climate science/LIM literature.


\section{Conclusions}
\label{sec:concl}

We have presented a new definition in which DMD is defined to be the eigendecomposition of an approximating linear operator.
Whereas existing DMD theory focuses on full-rank, sequential time series, our theory applies more generally to pairs of data vectors.
At the same time, our DMD algorithm is only a slight modification of the commonly used, SVD-based DMD algorithm.
It also preserves, and even strengthens, the links between DMD and Koopman operator theory.
Thus our framework can be considered to be an extension of existing DMD theory to a more general class of datasets.

For instance, when analyzing data generated by a dynamical system, we require only that the columns of the data matrices $X$ and $Y$ be related by the dynamics of interest, in a pairwise fashion.
Unlike existing DMD algorithms, we do not require that the data come from uniform sampling of a single time series, nor do we require that the columns of $X$ and $Y$ overlap.
We demonstrated the utility of this approach using two numerical examples.
In the first, we sampled a trajectory nonuniformly, significantly reducing computational costs.
In the second, we concatenated multiple datasets in a single DMD computation, effectively averaging out the effects of noise.
Our generalized interpretation of DMD also proved useful in explaining the results of DMD computations involving rank-deficient datasets.
Such computations may provide either meaningful or misleading information, depending on the dataset, and are not treated in the existing DMD literature.

Finally, we showed that DMD is closely related to both the eigensystem realization algorithm (ERA) and linear inverse modeling (LIM).
In fact, under certain conditions DMD and LIM are equivalent.
We used the connection between DMD and the ERA to motivate a strategy for dealing with the inability of DMD to correctly characterize standing waves.
An interesting future direction would be to explore whether or not lessons learned from past applications of LIM can similarly inform strategies for future applications of DMD.


\section*{Acknowledgments}
The authors gratefully acknowledge support from the Air Force Office of Scientific Research (AFOSR), the National Science Foundation (NSF), and the Federal Aviation Administration (FAA).
In addition, the authors thank Peter J. Schmid, Shervin Bagheri, Matthew O. Williams, Kevin K. Chen, and Jay Y. Qi for their valuable insights.
Specifically, the authors acknowledge Jay Y. Qi for first observing the limitations of DMD with regard to standing wave behavior, during his study of the Kuramoto--Sivashinsky equation.
Experimental data from the bluff-body wake discussed in Section~\ref{sssec:comb-mult-traj} were collected by John Griffin, Adam Hart, Louis N. Cattafesta III, and Lawrence S. Ukeiley.


\appendix

\section{Choices for mode scaling}
\label{sec:mode-scalings}

DMD practitioners often identify DMD modes of interest using their norms; it is generally assumed that modes with large norms are more dynamically important.
Sometimes, as in Figure~\ref{fig:bluff_body_wake_dmd_spectra}, the norms are weighted by the magnitude of the corresponding DMD eigenvalues to penalize spurious modes with large norms but quickly decaying contributions to the dynamics.
However, Definition~\ref{def:exact-DMD} allows for arbitrary scalings of the DMD modes.
Then in order to use mode norms as a criterion for selecting interesting DMD modes, an appropriate scaling must be chosen.
(We note that a nice approach to scaling was developed in~\cite{jovanovicArxiv13}, which considers an sparse representation where only a small number of DMD modes have nonzero magnitude.
We do not pursue this approach here, but the ideas in~\cite{jovanovicArxiv13} apply equally well to exact DMD modes and to projected DMD modes.)

Below, we discuss a number of possible options for scaling, any of which could be appropriate depending on the application.
Our discussion is split into two parts: first, we consider general scalings motivated by linear algebra concepts.
Second, we consider scalings that are appropriate for sequential time series.
We pay particular attention to the cost of computing the scaling factors, as DMD is primarily applied to high-dimensional datasets for which data manipulations can be costly.
For instance, due to computational costs, in practice only a minimal set of modes is computed.
This prohibits any scaling that requires first computing all of the DMD modes.

\subsection{Generic data}

Perhaps the most obvious scaling for an eigenvector is one such that each eigenvector has unit norm.
This can certainly be applied in computing DMD modes, and such scaling factors can be computed on a mode-by-mode basis.
For instance, one may first normalize the eigenvectors of $\tilde{A}$ ($w$ in~\eqref{eq:low-order-eig-prob}).
Then the projected DMD modes $\hat\phi = \mathbb{P}_X\phi = Uw$ also have unit norm, since $\hat\phi^*\hat\phi=w^*U^*Uw=w^*w=1$.
The scalings in~\eqref{eq:def-proj-DMD-mode} and~\eqref{eq:def-exact-DMD-mode} are then natural, recalling from Theorem~\ref{thm:proj-DMD-modes} that $\hat \phi = \mathbb{P}_X \phi$.

Another approach is to consider the relationship between the (exact) DMD modes and the adjoint DMD modes.
Recall that adjoint DMD modes are constructed via $\psi \triangleq U z$, where $z^*\tilde{A} = \lambda z^*$.
We observe that exact DMD modes are orthogonal to adjoint DMD modes with different eigenvalues, since
\begin{gather*}
    (\lambda_j\psi_j^*)\phi_k = \psi_j^* A \phi_k = \psi_j^*(\lambda_k\phi_k)\\
    \implies (\lambda_j-\lambda_k) \psi_j^* \phi_k = 0.
\end{gather*}
So as long as $\lambda_j\ne\lambda_k$, then $\psi_j^* \phi_k$ is zero.
If we scale the adjoint eigenvectors $z$ such that $z^* w = 1$, leaving $\|w\| = 1$, we observe that
\begin{align*}
    \psi_k^* \phi_k &= \frac{1}{\lambda} (U z_k)^* (Y V \Sigma^{-1} w_k) \\
    &= \frac{1}{\lambda} z_k^* \tilde{A} w_k  = 1.
\end{align*}
Thus, the exact DMD modes and adjoint DMD modes form a biorthogonal set; there is no additional scaling necessary after computing the DMD modes via~\eqref{eq:def-exact-DMD-mode}.
(We note that it is also possible to let $\|z\| = 1$ and scale $w$ to achieve the above result.)

\subsection{Sequential time series}

Based on~\eqref{eq:Koopman_dyn_seq}, projected DMD modes are often scaled such that the sum of the modes equals the first data vector, $x_0$~\cite{chenJNLS12}.
This is also natural for exact DMD modes.
However, representing $x_0$ using eigenvectors of $A$ generally requires eigenvectors whose eigenvalues are zero; these modes are not identified by Algorithm~\ref{alg:exact-DMD}.
Fortunately, it turns out that we can compute a scaling based on $x_0$ without computing those modes.

Let $\Phi = Y V \Sigma^{-1} W \Lambda^{-1}$ be the matrix whose columns are the DMD modes computed by Algorithm~\ref{alg:exact-DMD} (those with nonzero DMD eigenvalues), where $\tilde{A} W = W \Lambda$.
In order to satisfy~\eqref{eq:Koopman_dyn_seq} (for $k=1$), we must have
\begin{equation}
    \label{eq:sum-modes-to-y0}
    \Phi \Lambda d = y_0,
\end{equation}
where $d=(d_1,\ldots,d_j)$ with $d_j=\theta_j(z)$ in~\eqref{eq:Koopman_dyn_seq}.  We wish to solve~\eqref{eq:sum-modes-to-y0} for~$d$.
Note that, as long as the data matrices $X$ and~$Y$ are linearly consistent (Definition~\ref{def:linear-consistency}), we have $y_0=Ax_0$, so $y_0$ is in the range of~$A$, and therefore in the range of~$\Phi$, and~\eqref{eq:sum-modes-to-y0} has a solution.
This solution may be found, for instance, by QR decomposition of~$\Phi$.
Then $d_j$ contains the ``amplitude'' of each mode $\phi_j$, and if desired, the modes may be renormalized by these amplitudes.
(If no exact solution exists, we can instead solve for the least-squares solution.)

For large problems, it can be costly to compute the full set of DMD modes~$\Phi$.
In this case, one may solve for~$d$ using a pseudoinverse of~$\Phi$, following~\cite{belsonACMTMS13}:
\begin{align*}
    d &= \Lambda^{-1} (\Phi^* \Phi)^{-1} \Phi^* y_0 \\
    &= (W^* \Sigma^{-1} V^* Y^* Y V \Sigma^{-1} W)^{-1} W^* \Sigma^{-1} V^* Y^* y_0.
\end{align*}
In this approach, one does not need to explicitly calculate~$\Phi$; however, one does need the product $Y^* Y$, which is expensive to compute for large $n$.
Consider that when $n \gg m$, the most expensive step in the DMD algorithm is computing $X^* X$, required to compute the SVD of $X$ using the method of snapshots~\cite{sirovichQAM87_2}.
Then for generic $Y$, computing scaling factors from $Y^* Y$ is nearly as costly as the rest of the DMD algorithm, and may not be viable.
Furthermore, in this approach, the condition number of~$\Phi$ is squared, so for poorly conditioned matrices, the solution can be much less accurate than alternatives such as QR decomposition.
If, however, $X$ and $Y$ describe a sequential time-series (see~\eqref{eq:def-XY-sequential}), then $Y^* Y$ and $X^* X$ will contain many of the same elements.
In this case, $X^* X$ has already been computed and computing $Y^* Y$ requires only $m$ new inner products.


\bibliographystyle{unsrt}
\bibliography{jabbrv,references}




\end{document}